\newcommand{\ga}{\alpha}
\newcommand{\gb}{\beta}
\renewcommand{\gg}{\gamma}
\newcommand{\gw}{\omega}
\newcommand{\gS}{\Sigma}
\newcommand{\gs}{\sigma}
\newcommand{\eps}{\varepsilon}
\newcommand{\supp}{\mathrm{supp}}
\newcommand{\dom}{\mathrm{dom}}
\newcommand{\rng}{\mathrm{rng}}
\newcommand{\coll}{\mathrm{Coll}}
\newcommand{\hefi}{\mathrm{HF}}
\newtheorem{theorem}{Theorem}[section]
\newtheorem{claim}[theorem]{Claim}
\newtheorem{corollary}[theorem]{Corollary}
\newtheorem{proposition}[theorem]{Proposition}
\theoremstyle{definition}
\newtheorem{definition}[theorem]{Definition}
\newtheorem{example}[theorem]{Example}
\newtheorem{question}[theorem]{Question}
\title{Triangles and Vitali sets\footnote{2020 AMS subject classification 03E35, 14P99, 05C15.}}
\author{
Jind{\v r}ich Zapletal\\
University of Florida\\
zapletal@ufl.edu}
\begin{document}
\maketitle

\begin{abstract}
It is consistent relative to an inaccessible cardinal that ZF+DC holds, the hypergraph of equilateral triangles in Euclidean plane has countable chromatic number, while there is no Vitali set.
\end{abstract}

\section{Introduction}

Classification of analytic graphs and hypergraphs from various points of view is a subject matter frequently discussed in descriptive set theory and descriptive combinatorics. Geometric set theory adds its own angle: independence results in ZF+DC choiceless set theory regarding the existence of various structures (e.g., colorings or perfect matchings) related to analytic (hyper)graphs. One can consider for example the following broad question:

\begin{question}
For which analytic hypergraphs $\Gamma$ on Polish spaces is it consistent with ZF+DC that $\Gamma$ has countable chromatic number and no Vitali set exists?
\end{question}

\noindent Note that existence of Vitali set is equivalent in ZF+DC to the countable chromatic number of the Vitali equivalence relation on $\mathbb{R}$. \cite[Chapter 11]{z:algebraic} contains various results in this direction for locally countable graphs and hypergraphs. Difficulties quickly mount when one steps out of the locally countable realm. As one interesting and traditional class of cases, the algebraic graphs and hypergraphs on Euclidean spaces are combinatorially quite complex, but none of them seem to have any relationship with the Vitali equivalence relation. In an encouraging development, the question has been resolved in the affirmative for all algebraic graphs $\Gamma$ containing no perfect clique \cite{z:ngraphs}. In this paper, I resolve a quite special case in arity three, which requires several additional ideas.

\begin{definition}
Let $\langle X, \cdot\rangle$ be a standard Borel abelian group. A hypergraph $\Gamma$ of arity three on $X$ is a \emph{slim linear hypergraph} if there are Borel homomorphisms $g_0, g_1, g_2\colon X\to X$ such that

\begin{enumerate}
\item all of $g_0, g_1, g_2$ and $g_0+g_1$, $g_1+g_2$, and $g_0+g_2$ are injections;
\item $\Gamma$ is the set of all triples $\{x_0, x_1, x_2\}$ of pairwise distinct points such that under suitable enumeration, $g_0(x_0)+g_1(x_1)+g_2(x_2)=0$.
\end{enumerate}
\end{definition}

\noindent In most interesting cases, the sum $g_0+g_1+g_2$ is equal to the zero homomorphism, and then the hypergraph $\Gamma$ will be invariant under translations. Slim linear hypergraphs carry geometric meaning:

\begin{example}
Let $c\in\mathbb{C}$ be a non-real complex number, and consider the hypergraph on $\mathbb{C}$ given by the arithmetic identity $c\cdot (x_1-x_0)=x_2-x_0$. It is the set of all triangles in the complex plane directly similar to the triangle with vertices $0, 1, c$.
\end{example}

\begin{theorem}
\label{maintheorem}
Let $\langle X, +\rangle$ be a standard Borel abelian group, and let $\Gamma$ be a finite union of slim linear hypergraphs of arity three on $X$. Then in a balanced forcing extension of the choiceless Solovay model, the following items hold:

\begin{enumerate}
\item ZF+DC holds and $\Gamma$ has countable chromatic number;
\item there is no Vitali set;
\item there is no complete countable section for the $\mathbb{E}_1$ equivalence relation.
\end{enumerate}
\end{theorem}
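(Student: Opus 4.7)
The approach is the now-standard route through balanced Suslin forcing over the choiceless Solovay model, as developed in \cite{z:algebraic}. I fix an inaccessible cardinal $\kappa$, pass to the Solovay extension by $\colk$, and design a Suslin forcing $P_\gG$ whose generic object is a countable coloring of $\gG$. The three items of the theorem then follow once (a) the generic coloring witnesses countable chromatic number, (b) $P_\gG$ is balanced so that ZF$+$DC is preserved in the extension, and (c) the balanced conditions have extra combinatorial features that rule out both the Vitali selector and the complete countable section for $\mathbb{E}_1$.

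The definition of $P_\gG$ should be the expected one: Borel partial functions $p \colon B_p \to \gw$ on Borel domains $B_p \subseteq X$ whose range contains no monochromatic $\gG$-hyperedge, ordered by Borel refinement. A generic filter immediately supplies a total Borel coloring. The core work is the classification of balanced virtual conditions in the spirit of \cite[Chapter 11]{z:algebraic}. I expect a balanced condition to correspond to a total coloring $\bar c \colon X \to \gw$ living in an intermediate model, and the main technical step is showing such $\bar c$ exists below an arbitrary condition. Here the pairwise injectivity of $g_0, g_1, g_2$ and of $g_i + g_j$ for $i\ne j$ enters decisively: when extending a partial coloring by a new point $x$, each slim linear summand of $\gG$ contributes only countably many forbidden colors through $x$, since a $\gG$-hyperedge $\{x,y,z\}$ is determined by any two of its vertices via one of the injective sums. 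A transfinite recursion along a wellorder of $X$ in the intermediate model then yields $\bar c$.

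Items (2) and (3) reduce to the standard preservation theorems of \cite{z:algebraic}: any name for a Vitali selector or for a complete countable $\mathbb{E}_1$-section must be refuted by finding two mutually generic extensions of a fixed balanced $\bar c$ whose disagreement forces into the putative set two rationally equivalent reals (respectively, two $\mathbb{E}_1$-equivalent reals). The essential input is a recoloring flexibility for $P_\gG$: starting from $\bar c$ one can locally revise the color at a single point $x$ to any of the cocountably many admissible values, and such local revisions on disjoint portions of $X$ can be performed by mutually generic extensions below $\bar c$. This converts the preservation problem into a statement that the forbidden-color sets along each summand hypergraph remain sufficiently sparse to leave room for the required disagreement.

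The main obstacle I anticipate is precisely this recoloring flexibility in the presence of a \emph{finite union} of slim linear hypergraphs rather than a single one. For a single summand, the argument essentially parallels the template of \cite{z:ngraphs} for algebraic graphs without perfect cliques. The union case requires checking that the constraints through a fixed vertex remain countable in aggregate across all summands, and that the mutually generic recoloring construction can be carried out simultaneously while respecting the pairwise injectivity of each $g_i + g_j$ separately. Verifying that no conspiracy among summands concentrates uncountably many forbidden colors on a single vertex, and packaging this uniformly into the balance verification and the preservation arguments, is where the additional ideas alluded to in the introduction must enter, and where I would concentrate the bulk of the proof.
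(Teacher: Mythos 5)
Your overall frame (balanced Suslin forcing over the choiceless Solovay model) matches the paper, but the two steps you lean on hardest both have genuine gaps. First, the claim that a balanced condition $\bar c$ can be produced by transfinite recursion along a wellorder of $X$ because ``each slim linear summand contributes only countably many forbidden colors through $x$'' is false as stated. The injectivity of the sums $g_i+g_j$ shows that a hyperedge is determined by two of its vertices, so it bounds the number of hyperedges through a fixed \emph{pair}; but when you add a new point $x$ over an uncountable already-colored set $A$, there are typically continuum many pairs $\{y,z\}\subset A$ completing a hyperedge with $x$ (think of the equilateral-triangle hypergraph: every $y$ near $x$ lies in such a pair), so uncountably many --- possibly all --- colors can be forbidden and the greedy recursion dies. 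The paper's entire Section~\ref{remaindersection} exists to circumvent exactly this: for a $\Gamma$-\emph{closed} (sub)group $A$, the remainder graph $\Gamma_A$ admits a homomorphism into a locally finite graph on the coset space $X/A$ (Example~\ref{criticalexample}), and total colorings are assembled as amalgamations of coherent sequences $\langle c_i, d_i\rangle$ where $d_i$ colors the remainder graph of the previous stage (Proposition~\ref{amalgamationproposition}). Without this device you get neither item (1) nor the balanced virtual conditions. Relatedly, the paper's poset does not use Borel partial colorings: conditions have countable $\Gamma$-closed domains, and the ordering carries an extra ``orbit-boundedness'' clause ($q''a$ meets only finitely many $C_n$ on each $\dom(p)$-orbit) which is what makes the amalgamation of conditions from mutually generic extensions work in Proposition~\ref{balanceproposition}.

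Second, balance alone cannot give item (2): the poset adding a Vitali transversal is itself balanced, so no amount of ``mutually generic recoloring below $\bar c$'' of the generic kind you describe will refute a name for a Vitali set. The paper instead verifies \emph{Ramsey control}: it builds an auxiliary c.c.c.\ finite-condition iteration of length $\gw_1$ (alternating colorings of $\Gamma$ and of the locally finite images of the remainder graphs), covers it by definably Ramsey-centered sets via the partition calculus of Theorems~\ref{1theorem} and~\ref{2theorem}, and then invokes the dedicated preservation theorem for the Vitali equivalence relation. Your proposal contains no substitute for this Ramsey-centered analysis. Item (3) is likewise not a corollary of the same mechanism as (2); it requires \emph{nested} balance, i.e.\ a descending $\gw$-sequence of balanced conditions along a choice-coherent sequence of models, which the paper obtains from the amalgamation machinery applied to an ill-founded index order $I=\gw+2$. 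Finally, the difficulty you flag as central --- the passage from one slim hypergraph to a finite union --- is essentially costless in the paper (the notion of $\Gamma$-closed set simply absorbs all the generating homomorphisms at once); the real work lies in the two points above.
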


\begin{corollary}
Relative to an inaccessible cardinal, it is consistent with ZF+DC that the hypergraph of equilateral triangles in $\mathbb{R}^2$ has countable chromatic number, yet there is no Vitali set.
\end{corollary}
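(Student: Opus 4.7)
The strategy is to exhibit the equilateral-triangle hypergraph on $\mathbb{R}^2 \cong \mathbb{C}$ as a slim linear hypergraph and invoke Theorem~\ref{maintheorem}. Setting $c = e^{i\pi/3}$, the Example already suggests the right parameters: define Borel homomorphisms $g_0, g_1, g_2 \colon \mathbb{C} \to \mathbb{C}$ by $g_0(x) = (c-1)x$, $g_1(x) = -cx$, and $g_2(x) = x$, so that the equation $g_0(x_0) + g_1(x_1) + g_2(x_2) = 0$ rearranges to $c(x_1 - x_0) = x_2 - x_0$.

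First I would verify the injectivity conditions. Each $g_i$ is multiplication by a nonzero complex number and is therefore injective. The three pairwise sums $g_0 + g_1 = -\id$, $g_1 + g_2 = (1-c)\,\id$, and $g_0 + g_2 = c\,\id$ are likewise multiplications by nonzero constants, hence injective as well. Next I would show that the resulting slim linear hypergraph is precisely the hypergraph $\Gamma_{eq}$ of unordered equilateral triangles in the plane. Since $|c|=1$ and $\arg c = \pi/3$, the equation $c(x_1 - x_0) = x_2 - x_0$ forces $|x_1 - x_0| = |x_2 - x_0|$ and $\angle x_1 x_0 x_2 = \pi/3$, so the triple is equilateral. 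Conversely, for any equilateral triangle $\{y_0, y_1, y_2\}$, fixing $x_0 = y_0$ and then ordering the remaining two vertices as $(x_1, x_2)$ so that the $60^\circ$ counterclockwise rotation about $y_0$ carries $x_1$ to $x_2$ yields a valid enumeration; such a choice is available regardless of the triangle's orientation, so every equilateral triangle appears in the hypergraph.

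Having established this identification, I would apply Theorem~\ref{maintheorem} with $\langle X, + \rangle = \langle \mathbb{C}, + \rangle$ and $\Gamma = \Gamma_{eq}$ to obtain, over the choiceless Solovay model built from an inaccessible cardinal, a balanced forcing extension in which ZF+DC holds, $\Gamma_{eq}$ has countable chromatic number, and no Vitali set exists. The only potential obstacle is the orientation check in the second paragraph: one must confirm that a single slim linear hypergraph captures equilateral triangles of both orientations. Should that verification demand more care than anticipated, a safe fallback is to take the union of the two slim linear hypergraphs with parameters $c = e^{i\pi/3}$ and $c = e^{-i\pi/3}$, which still satisfies the hypotheses of Theorem~\ref{maintheorem}.
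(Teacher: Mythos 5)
Your proposal is correct and is exactly the derivation the paper intends: the Example with $c=e^{i\pi/3}$ exhibits the equilateral-triangle hypergraph on $\mathbb{C}$ as a single slim linear hypergraph (your homomorphisms $g_0=(c-1)\,\id$, $g_1=-c\,\id$, $g_2=\id$ satisfy all six injectivity requirements, and since the hypergraph consists of unordered triples ``under suitable enumeration,'' one orientation of the rotation suffices for every equilateral triangle), after which Theorem~\ref{maintheorem} gives the consistency statement. The fallback of taking the union with the $c=e^{-i\pi/3}$ hypergraph is unnecessary but harmless, since the theorem allows finite unions.
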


\noindent The theorem understates the understanding of the resulting model. In particular, I prove that a certain graph simpler than the Vitali equivalence relation--the diagonal Hamming graph--has uncountable chromatic number in the resulting model. 

In a number of natural cases, I can show that it is not entirely easy to color slim linear hypergraphs. No such coloring will appear in a compactly balanced forcing extension of the choiceless Solovay model as per the following theorem. The broad class of compactly balanced forcings was introduced in \cite[Section 9.3]{z:geometric}; it includes in particular the poset for adding a Ramsey ultrafilter, or the poset for adding a linear ordering of a quotient space of any Borel equivalence relation.

\begin{theorem}
\label{maintheorem2}
Let $\langle X, +\rangle$ be a Polish abelian group, let $n\geq 3$ be a number, and let $g_i\colon X\to X$ for $i\in n$ be continuous homomorphisms such that

\begin{enumerate}
\item $\gS_{i\in n}g_i=0$;
\item every open neighborhood of $0$ in $X$ contains pairwise distinct nonzero points $x_i$ for $i\in n$ such that $\gS_{i\in n}g_i(x_i)=0$.
\end{enumerate}

\noindent Then, in every compactly balanced extension of the choiceless Solovay model, the hypergraph $\Gamma$ on $X$ consisting of all tuples $\{x_i\colon i\in n\}$ of pairwise distinct elements of $X$ such that $\gS_{i\in n}g_i(x_i)=0$ has uncountable chromatic number.
\end{theorem}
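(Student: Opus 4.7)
I would argue by contradiction using the balanced-forcing framework of \cite[Chapter 9]{z:geometric}. Suppose a compactly balanced forcing $P$ has, in its extension of the choiceless Solovay model, a countable coloring $\dot c : X \to \omega$ of $\Gamma$, witnessed by some $p \in P$. The plan is to produce a $\dot c$-monochromatic $\Gamma$-edge, contradicting this assumption.

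The first step is to use the compact balance of $P$ to pin down $\dot c$ on a large ground-model set. Specifically, I would show that there exist, in the Solovay ground model, a non-meager Borel set $B \subseteq X$ and a Borel coloring $c^* : B \to \omega$ such that $p \Vdash \dot c \restriction B = c^*$. The compactness clause of ``compactly balanced'' is what enables this reduction, generalizing the mechanism for the Ramsey-ultrafilter and quotient-linear-order forcings of \cite[Section 9.3]{z:geometric}. The Baire-property regularity available in the Solovay model then allows refinement of $B$ to a dense $G_\delta$ subset on which $c^*$ is continuous, and pigeonhole on the countable range of $c^*$ followed by passage to a non-meager Borel subset produces $B' \subseteq B$ on which $c^*$ is constantly some $k^* \in \omega$.

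Next I would invoke Pettis's theorem on the non-meager $B'$ to find an open neighborhood $U$ of $0$ with $U \subseteq B' - B'$. By hypothesis (2) there exist pairwise distinct nonzero $x_0, \ldots, x_{n-1} \in U$ with $\sum_{i \in n} g_i(x_i) = 0$; iterating Pettis's theorem (and shrinking the $x_i$ inside $U$ if necessary) yields $y \in \bigcap_{i \in n} (B' - x_i)$, so that $y + x_i \in B'$ for every $i \in n$. Hypothesis (1) then gives
\[
\sum_{i \in n} g_i(y + x_i) \;=\; \sum_{i \in n} g_i(y) + \sum_{i \in n} g_i(x_i) \;=\; 0 + 0 \;=\; 0,
\]
while the $n$ points $y + x_i$ are pairwise distinct. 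So $\{y + x_i : i \in n\}$ is a $\Gamma$-edge entirely inside $B'$ and of color $k^*$, contradicting that $\dot c$ is a $\Gamma$-coloring.

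The principal obstacle is the first step: extracting the non-meager Borel set $B$ and the ground-model coloring $c^*$ from the compactly balanced name $\dot c$. One must match the syntax of the specific forcing $P$ to the general compactly balanced template from \cite[Section 9.3]{z:geometric} and verify that the analytic information ``$\dot c(x) = k$'' is decided by $P$ in a uniform, \emph{compact} fashion over a non-meager base, not merely at each point separately. This is precisely the work done by the compactness clause in the definition of compactly balanced forcings; granting it, the remainder of the argument is a standard consequence of Pettis's theorem and the translation invariance of $\Gamma$ enforced by hypothesis (1).
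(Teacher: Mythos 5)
The second half of your argument (Pettis plus hypotheses (1) and (2)) is a correct and standard proof that $\Gamma$ admits no \emph{Baire-measurable} countable coloring. The problem is your first step, which is not merely ``the principal obstacle'' but is actually false: compact balance does not let you pin down $\dot c$ on a non-meager Borel set in the ground model. A single condition in a compactly balanced poset typically decides the values of a name for a function only on a meager set of arguments. Concretely, for the Ramsey-ultrafilter poset (the motivating compactly balanced example), a condition is essentially a countably generated filter $F$, and $F$ decides ``$\check a\in\dot U$'' only for $a$ in $F$ or its dual ideal --- a meager collection; a coloring read off from the generic ultrafilter is therefore not decided by any condition on a non-meager set. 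More globally, if your step were available, every countable coloring in the extension would be Baire measurable on a non-meager set, and then plain balance would suffice; but Theorem~\ref{maintheorem} produces a (placid, hence balanced) extension in which $\Gamma$ \emph{does} have a countable coloring, which by your own Pettis argument cannot be Baire measurable. So the entire content of Theorem~\ref{maintheorem2} lies in defeating colorings without the Baire property, and the reduction you propose begs exactly that question.

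The paper's route is different and does not pass through Baire category at all. Hypotheses (1) and (2) are used to build a continuous homomorphism $\pi_n\colon n^\gw\to X$ from the Hales--Jewett-style hypergraph $\Delta_n$ (combinatorial lines) into $\Gamma$, by choosing $\Gamma$-hyperedges of rapidly shrinking diameter and summing. One then shows that $\Delta_n$ has uncountable chromatic number in every compactly balanced extension, via an auxiliary proper, bounding forcing $R_n$ adding no independent reals whose generic filter produces a $\Delta_n$-hyperedge of mutually generic points; the compactness clause is used there to find a single balanced virtual condition compatible with the decisions of $\dot c$ at all $n$ generic vertices simultaneously (following \cite[Section 11.8]{z:geometric}), not to decide $\dot c$ on a ground-model set. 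If you want to repair your write-up, you would need to replace the first step with an argument of this kind.
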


\begin{corollary}
Relative to an inaccessible cardinal, it is consistent with ZF+DC that there is a non-principal ultrafilter on $\gw$, yet the hypergraph of equilateral triangles in $\mathbb{R}^2$ has uncountable chromatic number.
\end{corollary}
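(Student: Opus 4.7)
The plan is to deduce the corollary directly from Theorem~\ref{maintheorem2}, by realizing the equilateral-triangle hypergraph on $\mathbb{R}^2$ as the instance of the setup of that theorem associated with multiplication by cube roots of unity on $\mathbb{C}$, and by taking as the compactly balanced forcing the poset for adding a Ramsey ultrafilter on $\omega$.

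First, identify $\mathbb{R}^2$ with the Polish abelian group $\langle\mathbb{C},+\rangle$ and set $\zeta=e^{2\pi i/3}$. Let $g_0=\mathrm{id}$, $g_1(x)=\zeta x$, and $g_2(x)=\zeta^2 x$; these are continuous endomorphisms of $\mathbb{C}$. Writing any equilateral triangle as $\{c+a\zeta^j:j<3\}$ for suitable $c,a\in\mathbb{C}$ with $a\neq 0$, a short computation shows that three pairwise distinct complex numbers form an equilateral triangle if and only if some enumeration $(x_0,x_1,x_2)$ of them satisfies $x_0+\zeta x_1+\zeta^2 x_2=0$: the counterclockwise orientation yields the identity for the three cyclic enumerations, and the clockwise orientation yields it for the remaining three enumerations, obtained by transposing two entries. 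Hence the hypergraph $\Gamma$ of Theorem~\ref{maintheorem2} associated with $g_0,g_1,g_2$ is precisely the equilateral-triangle hypergraph on $\mathbb{R}^2$.

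Next I verify the two hypotheses of Theorem~\ref{maintheorem2}. Hypothesis (1), namely $g_0+g_1+g_2=0$, reduces to the classical identity $1+\zeta+\zeta^2=0$. Hypothesis (2) follows from the observation that any equilateral triangle whose centroid is slightly displaced from the origin has all three vertices pairwise distinct and nonzero, and such a triangle can be rescaled to lie inside any prescribed neighborhood of $0$.

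Finally, the poset $\mathbb{P}$ for adding a Ramsey ultrafilter is listed among the compactly balanced forcings in the paragraph preceding Theorem~\ref{maintheorem2}. In the resulting extension of the choiceless Solovay model, ZF+DC holds, the generic filter yields a nonprincipal ultrafilter on $\omega$, and Theorem~\ref{maintheorem2} delivers the uncountable chromatic number of the equilateral-triangle hypergraph on $\mathbb{R}^2$. The only step with nontrivial content is the identification of the equilateral-triangle hypergraph with the linear-identity hypergraph for the chosen triple $g_0,g_1,g_2$, in particular verifying that \emph{both} orientations of a triangle are captured by some enumeration; the verification of the two hypotheses of Theorem~\ref{maintheorem2} and the compact balance of $\mathbb{P}$ are then immediate.
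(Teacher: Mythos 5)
Your proposal is correct and is exactly the argument the paper intends: the corollary is stated as an immediate instance of Theorem~\ref{maintheorem2}, obtained by realizing the equilateral-triangle hypergraph via $g_j(x)=\zeta^j x$ for $\zeta=e^{2\pi i/3}$ on $\mathbb{C}\cong\mathbb{R}^2$ (so that hypothesis (1) is $1+\zeta+\zeta^2=0$ and hypothesis (2) follows from rescaling) and taking the Ramsey-ultrafilter poset as the compactly balanced forcing. Your careful check that both orientations of a triangle are captured by some enumeration is the right point to verify, and it is correct.
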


There are many open questions left. Since the equilateral triangle hypergraph in $\mathbb{R}^3$ is not linear, methods of this paper do not apply to it, and it is the simplest algebraic hypergraph for which I do not know if its countable chromatic number implies existence of a Vitali set in ZF+DC. I do not know whether the chromatic number of the Kechris--Solecki--Todorcevic graph $\mathbb{G}_0$ \cite{kechris:chromatic} is countable or uncountable in the resulting model. Finally, one should be able to remove the inaccessible cardinal assumption from the proof, but I have not done the footwork necessary for that.

The architecture of the paper is probably somewhat surprising to a reader who is not familiar with the mysterious ways of geometric set theory \cite{z:geometric}. In Section~\ref{remaindersection}, I define the central combinatorial tool of the paper: the remainder graphs of a ternary hypergraph. This is an interesting object regardless of any forcing considerations, and the main point of this paper is that in the case of  linear hypergraphs, the remainder graphs can be handled efficiently. In Section~\ref{cccsection}, I define a couple of c.c.c.\ finite condition coloring posets for various graphs and hypergraphs. A fine evaluation of their chain condition situation is necessary to exclude the Vitali set in the model for Theorem~\ref{maintheorem}. This part of the paper is reminiscent of certain computations of Todorcevic \cite{todorcevic:examples} regarding similar finite condition c.c.c.\ posets. In Section~\ref{balancedsection}, I define a $\gs$-closed balanced poset $P$ adding a coloring of the hypergraph $\Gamma$ of Theorem~\ref{maintheorem}.  The model for Theorem~\ref{maintheorem} is then a $P$-extension of a choiceless Solovay model derived from an inaccessible cardinal. Section~\ref{finalsection} then checks the properties of this model, using the remainder graphs and c.c.c.\ coloring posets identified earlier. Finally, Section~\ref{problematicsection} shows that there are natural algebraic hypergraphs for which in ZF countable chromatic number has consequences contradicting the conclusion of Theorem~\ref{maintheorem}.

The terminology of the paper follows the set theoretic standard of \cite{jech:newset}, in matters of descriptive set theory that of \cite{kechris:classical}, in matters of geometric set theory that of \cite{z:geometric}. $\mathbb{E}_1$ is the equivalence relation on the set $\mathbb{R}^\gw$ connecting points $x, y$ if the set $\{n\in\gw\colon x(n)\neq y(n)\}$ is finite. The \emph{diagonal Hamming graph} $\mathbb{H}_{<\gw}$ is the graph on the set of all functions $x\in\gw^\gw$ such that $\forall n\ x(n)\leq n$, connecting points $x, y$ if there is exactly one $n\in\gw$ such that $x(n)\neq y(n)$. $\hefi$ denotes the set of all hereditarily finite sets.

\section{Remainder graphs}
\label{remaindersection}

The consistency result depends on several combinatorial observations. The first seems critical and irreplaceable in the argument, and its limitations seem to be the main obstacle to natural generalizations of Theorem~\ref{maintheorem}. Consider the following definition.

\begin{definition}
Let $\Gamma$ be a hypergraph on a Polish space $X$, and let $A\subset X$ be a set. The \emph{remainder graph} is the graph $\Gamma_A$ on $X\setminus A$ connecting distinct points $x, y$ if there is $z\in A$ such that $\{x, y, z\}\in\Gamma$ holds.
\end{definition}

\noindent The most natural case of remainder graphs appears when $A$ is a suitably algebraically closed subset of $X$, possibly uncountable. I will be interested in the chromatic number of the remainder graphs. This looks like a completely intractable issue, but some cases do allow for informative insight. Consider the following examples.

\begin{example}
Let $\Gamma$ be the hypergraph on $\mathbb{R}$ of all triples $\{x, y, z\}$ of pairwise distinct elements which (in some ordering) satisfy the identity $x-2y+z=0$. Let $A\subset\mathbb{R}$ be a subfield. Then the remainder graph contains no circuits of odd length, and it contains an $|A|\times|A|$-biclique.
\end{example}

\begin{proof}
First, I will show that there are no circuits of odd length in the remainder graph. For this, consider the group $G$ of all non-constant linear functions from $\mathbb{R}$ to itself with the composition operation, acting on $\mathbb{R}$ by application. For each such a function $g(u)=au+b$, call $a$ the \emph{slope} of the function. The slope function is obviously a homomorphism from the group $G$ to the multiplicative group of nonzero real numbers. Each element of $G$ except for the identity has at most one fixed point.

Now, let $n\in\gw$ be a number and $\langle x_i\colon i\leq n\rangle$ be a path in the remainder graph with $x_n=x_0$; I must show that $n$ is even. To this end, for each $i\in n$ find $z_i\in V$ such that $\{x_i, x_{i+1}, z_i\}\in\Gamma$. There are three possible mutually exclusive cases how this can occur:

\begin{itemize}
\item $x_i-2x_{i+1}+z_i=0$. In this case, let $g_i\colon X\to X$ be the function defined by $g_i(u)=(u+z_i)/2$;
\item $x_i-2z_i+x_{i+1}=0$. In this case, let $g_i\colon X\to X$ be the function defined by $g_i(u)=2z_i-u$;
\item $z_i-2x_i+x_{i+1}=0$. Here, let $g_i(u)=2u-z_i$.
\end{itemize} 

\noindent All the functions $g_i$ for $i\in n$ are coded in $F_0$, and $g_i(x_i)=x_{i+1}$. The composition of all the elements $g_i\in G$ for $i\in n$ is a linear function coded in $A$, and it has $x_0\notin A$ as a fixed point, therefore it has to be the identity. Now consider the slopes of the functions $g_i$: in the first case, it is $1/2$, in the second case, it is $-1$, in the third case, it is $2$. The product has to give $1$, the slope of the identity. Thus, the first and third case must be used equal number of times, and the second case must be used even number of times. In conclusion, $n$ is even as desired.

Now, to produce the required biclique in the remainder graph, let $B_0, B_1\subset A$ be disjoint sets of reals of cardinality $|A|$. Let $\eps>0$ be a real number in $\mathbb{R}\setminus A$. Consider the set $C_0=B_0+\eps$ and the set $C_1=(B_1+\eps)/2$. These are disjoint subsets of $\mathbb{R}\setminus A$, and they form an $|A|\times|A|$ bi-clique in the remainder graph. To see this, let $x_0\in B_0$ and $x_1\in B_1$ be points. Then $(x_0+\eps)-(2(x_1+\eps)/2)=x_0-x_1\in A$, so $\{x_0+\eps, (x_1+\eps)/2, x_0-x_1\}\in\Gamma$ and $x_0+\eps, (x_1+\eps)/2$ are connected in the remainder graph as desired.
\end{proof}

\begin{example}
Let $\Gamma$ be the hypergraph of equilateral triangles in $\mathbb{R}^3$. Let $F$ be any real closed subfield of $\mathbb{R}$ properly smaller than $\mathbb{R}$, and let $A=F^3$. Then in the remainder graph, one can find a triangle. Just let $\eps>0$ be a real number $\mathbb{R}\setminus F$, let $z\in A$ be an arbitrary point, and find any regular tetrahedron with one vertex equal to $z$ and length of all sides equal to $\eps$. The other three vertices cannot belong to $A$ since together with $z$ each of them computes $\eps$, and they form a triangle in the remainder graph.
\end{example}

\begin{example}
Let $\Gamma$ be the hypergraph of isosceles triangles on $\mathbb{R}^2$. Let $F$ be any real closed subfield of $\mathbb{R}$ properly smaller than $\mathbb{R}$, and let $A=F^3$. Then the remainder graph contains a perfect clique. To see this, let $\eps>0$ be a real number which is not in $F$, and let $z\in A$ be an arbitrary point. Consider the set $C$ of all points in $\mathbb{R}^2$ which are at a distance $\eps$ from $z$. It is immediate to see that $C$ contains no points from $A$ (since every point in $C$ together with $z$ computes $\eps$) and $C$ is a clique in the remainder graph.
\end{example}

\noindent Finally, we get to the hypergraph of immediate interest to the present paper.

\begin{definition}
Let $\langle X, +\rangle$ be a standard Borel abelian group. Let $\Gamma=\bigcup_{i\in n}\Gamma_i$ be a finite or countable union of slim linear hypergraphs of arity three, for $n\leq\gw$. Let $\Gamma_i$ be generated by continuous homomorphisms $g_j^i$ for $j\in 3$. A set $A\subset X$ is $\Gamma$-closed if it is a subgroup of $X$, it is closed under each homomorphism $g_j^i$ and its inverse for $i\in n$ and $j\in 3$, and also closed under the inverses of the homomorphisms $g_j^i+g_k^i$ for $i\in\gw$ and $j, k$ distinct indices in $3$.
\end{definition}

\noindent Note that all the homomorphisms mentioned in the above definitions are injective per the definition of a slim linear hypergraphs, so their inverses are well-defined partial functions. The definition of $\Gamma$-closed set seems to depend on the choice of the decomposition of $\Gamma$ into slim hypergraphs and their generating homomorphisms; I will keep such decomposition and its generating homomorphisms always fixed. In this way, every infinite subset of $X$ has a smallest $\Gamma$-closed superset which is of the same cardinality, and an increasing union of $\Gamma$-closed sets is $\Gamma$-closed.

\begin{example}
\label{criticalexample}
Let $\langle X, +\rangle$ be a standard Borel abelian group. Let $\Gamma$ be a finite (resp. countable) union of slim linear hypergraphs of arity three. Let $A\subset X$ be a $\Gamma$-closed set. Then there is a homomorphism of the remainder graph $\Gamma_A$ to a locally finite (resp. locally countable) graph.
\end{example}

\begin{proof}
et $\Gamma=\bigcup_{i\in n}\Gamma_i$ be a finite or countable union of slim linear hypergraphs of arity three, for $n\leq\gw$. Let $\Gamma_i$ be generated by continuous homomorphisms $g_j^i$ for $j\in 3$. Let $E$ be the equivalence relation on $X\setminus A$ connecting distinct points $x_0, x_1$ if $x_0-x_1\in A$. Let $Y$ be the set of all $E$-classes, and define the graph $\Theta$ on $Y$ which connects classes $c\neq d$ if there are representatives $x\in c$ and $y\in d$ which are connected in the remainder graph. I will show that that the graph $\Theta$ is locally finite (resp. locally countable), and the map $f\colon X\to Y$, $f(x)=[x]_E$ is a homomorphism of the remainder graph to $\Theta$.

To show that the map $f$ is a homomorphism, it is only necessary to show that if distinct points $x_0, x_1\in X\setminus A$ are connected in the remainder graph, then they are not $E$-equivalent. Suppose towards a contradiction that they are. Let $i\in n$ and $x_2\in A$ be such that $g_0^i(x_0)+g_1^i(x_1)+g_2^i(x_2)=0$, and let $y\in A$ be such that $x_0+y=x_1$.
Since $A$ is a subgroup of $X$ closed under the homomorphisms, plugging in for $x_1$ it becomes clear that $(g_0+g_1)(x_0)\in A$. Since $A$ is closed under the inverse of $g_0+g_1$, it follows that $x_0\in A$, contradicting the initial choice of $x_0$. 

To show that the graph $\Theta$ is locally finite (resp. locally countable), it will be enough to show that if $c, d, d'$ are $E$-classes, $i\in n$ is an index, and there are points $x_0, x'_0\in c$, $x_1\in d_0$, $x'_1\in d'$, and $x_2, x'_2\in A$ such that $g^i_0(x_0)+g_1^i(x_1)+g_2^i(x_2)=0$ and $g^i_0(x'_0)+g_1^i(x'_1)+g_2^i(x'_2)=0$ both hold, then $d=d'$. To see that, subtract the latter equality from the former and get $g^i_0(x_0-x'_0)+g^i_1(x_1-x'_1)\in A$. The first summand here belongs to $A$ as $x_0\mathrel Ex'_0$ holds. As $A$ is closed under the inverse of $g^i_1$, it follows that $x_1-x'_1\in A$ holds. Thus $d=d'$ as desired.
\end{proof}

\begin{corollary}
\label{chromaticcorollary1}
Let $\langle X, +\rangle$ be a standard Borel abelian group. Let $\Gamma$ be a union of countably many slim linear hypergraphs of arity three on $X$. Whenever $A\subset X$ is a $\Gamma$-closed set, the chromatic number of $\Gamma(A)$ is countable.
\end{corollary}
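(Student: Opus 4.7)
The plan is to reduce the claim to Example~\ref{criticalexample} and then invoke a general fact about locally countable graphs. First, I would apply Example~\ref{criticalexample} to the countable union $\Gamma$ and the $\Gamma$-closed set $A$ to obtain a graph homomorphism $f\colon X\setminus A\to Y$ from the remainder graph $\Gamma_A$ to a locally countable graph $\Theta$ on some vertex set $Y$. Since a graph homomorphism pulls proper colorings back from target to domain, it is enough to produce a proper coloring of $\Theta$ in countably many colors.

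For that, I would use the standard observation that every locally countable graph has countable chromatic number. The key point is that in such a graph, the ball of radius $n$ around any fixed vertex is at most countable, by induction on $n$: the ball of radius $n+1$ is the union of the neighborhoods of the countably many vertices in the ball of radius $n$, hence a countable union of countable sets. Taking the union over $n<\gw$ shows that each connected component of $\Theta$ is at most countable. Each countable graph admits a proper $\gw$-coloring by greedy induction on an enumeration of its vertex set, and reusing the same palette $\gw$ across the possibly uncountably many components, with AC supplying one coloring per component, assembles a proper countable coloring of $\Theta$.

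Composing this coloring of $\Theta$ with the homomorphism $f$ yields a proper countable coloring of the remainder graph $\Gamma_A$, which is the desired conclusion. I do not anticipate any real obstacle: the combinatorial substance of the statement is already absorbed into Example~\ref{criticalexample}, and the remaining ingredient is only the elementary observation that connected components of a locally countable graph are countable.
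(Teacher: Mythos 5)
Your proposal is correct and is exactly the route the paper intends: Corollary~\ref{chromaticcorollary1} is stated without proof as an immediate consequence of Example~\ref{criticalexample}, with the implicit remaining step being precisely your observation that a locally countable graph has countable connected components and hence countable chromatic number, which pulls back along the homomorphism. Nothing is missing.
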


\noindent One can use the chromatic number of remainder graphs to prove that the chromatic number of $\Gamma$ itself is countable. In addition, the mechanics of this proof will drive the consistency arguments in Section~\ref{finalsection}. For a notational convenience, my (hyper)graph colorings of hypergraphs will often have the countable set $\hefi$ of hereditarily finite sets as a co-domain.

\begin{definition}
\label{coherentdefinition}
Let $\langle X, +\rangle$ be a Polish abelian group. Let $\Gamma$ be a countable union of slim linear hypergraphs.
A \emph{coherent sequence of colorings} is a tuple $\langle I, \leq, A_i, c_i, d_i\rangle$ where 

\begin{enumerate}
\item $\langle I, \leq\rangle$ be a linearly ordered set;
\item $\langle A_i\colon i\in I\rangle$ is an inclusion-increasing sequence of $\Gamma$-closed subsets of $X$;
\item for every $x\in\bigcup_iA_i$, there is a smallest index $i\in I$ such that $x\in A_i$;
\item $c_i\colon A_i\to \hefi$ is a $\Gamma$-coloring;
\item if $i$ is the smallest element of $I$ then $d_i=0$; otherwise, write $\Delta_i$ for the remainder graph $\Delta(\bigcup_{j<i}A_j)$, and $d_i\colon A_i\setminus\bigcup_{j<i}A_j\to\hefi$ is a $\Gamma_{\bigcup_{j<i}A_j}$-coloring. 
\end{enumerate}
\end{definition}

\noindent Note that the linearly ordered set $I$ must have a smallest element, namely the smallest $i$ such that $1\in F_i$. One way to satisfy item (3) is to insist that $\leq$ is a well-ordering; however, I will have an opportunity to use an ill-founded linear order as well.

\begin{definition}
\label{amalgamationdefinition}
The \emph{amalgamation} of a coherent sequence $\langle I, \leq, A_i, c_i, d_i\rangle$ of colorings is the function $e$ on $\bigcup_i A_i$ defined as follows. For every $x\in\bigcup_iA_i$ find the smallest $i\in I$ such that $x\in A_i$. If $i$ is the smallest element of $I$, then let $e(x)=f_i(x)$; otherwise, let $e(x)=\langle f_i(x), g_i(x)\rangle$.
\end{definition}

\noindent Note that the amalgamation extends $f_i$ for the smallest element $i\in I$.

\begin{proposition}
\label{amalgamationproposition}
The amalgamation of a coherent sequence of colorings is a $\Gamma$-coloring.
\end{proposition}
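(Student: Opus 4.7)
The plan is to verify the coloring property by a case analysis on the ``levels'' $i_0,i_1,i_2$ of the three vertices of a given hyperedge. Fix $\{x_0,x_1,x_2\}\in\Gamma$; for each $k<3$ let $i_k\in I$ be the smallest index with $x_k\in A_{i_k}$ (available by condition (3) of the definition), and relabel so that $i_0\leq i_1\leq i_2$. The goal is to show that $e(x_0)$, $e(x_1)$, $e(x_2)$ cannot all coincide.

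The heart of the argument will be a preliminary observation ruling out the configuration $i_1<i_2$, leaving only the two cases $i_0=i_1=i_2$ and $i_0<i_1=i_2$. To prove this, I would set $B=\bigcup_{j<i_2}A_j$, which is $\Gamma$-closed as an increasing union of $\Gamma$-closed sets, and exploit the fact that $x_0,x_1\in A_{i_1}\subset B$. The hyperedge furnishes some $i\in n$ and some enumeration $\pi$ of $\{x_0,x_1,x_2\}$ with $g_0^i(\pi(0))+g_1^i(\pi(1))+g_2^i(\pi(2))=0$. Since $B$ is a subgroup closed under each $g_j^i$, the two summands corresponding to $x_0,x_1$ lie in $B$, so the third does as well, and closure of $B$ under the inverse of the relevant $g_j^i$ then yields $x_2\in B$, contradicting the minimality of $i_2$. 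Notably, the closure of $B$ under inverses of the sums $g_j^i+g_k^i$ is not needed here, because two full summands already sit in $B$.

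With $i:=i_1=i_2$ fixed, only two cases remain. If $i_0=i$, the whole triangle is contained in $A_i$, so the $\Gamma$-coloring $c_i$ distinguishes at least two of its vertices; since the first coordinate of $e(x)$, or all of $e(x)$ when $i$ is the $\leq$-minimum of $I$, equals $c_i(x)$ whenever $i$ is the smallest index containing $x$, this distinction is inherited by the amalgamation. If instead $i_0<i$, the index $i$ is strictly above the $\leq$-minimum of $I$, so $d_i$ is defined on $A_i\setminus\bigcup_{j<i}A_j$; both $x_1,x_2$ lie in this set, while $x_0\in A_{i_0}\subset\bigcup_{j<i}A_j$ witnesses an edge between $x_1$ and $x_2$ in the remainder graph $\Delta_i$. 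Consequently $d_i(x_1)\neq d_i(x_2)$, and the second coordinates of $e(x_1)$ and $e(x_2)$ disagree.

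The only substantive step is the preliminary observation; the rest is bookkeeping from the definitions. Conceptually, the $\Gamma$-closure of each $\bigcup_{j<i}A_j$ forces every cross-level hyperedge to place at least two of its vertices at its maximum level, and this is precisely the configuration that the remainder-graph coloring $d_i$ was designed to separate.
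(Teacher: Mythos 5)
Your proof is correct and follows the same route as the paper's: rule out the configuration with exactly one vertex at the maximal level using $\Gamma$-closure of $\bigcup_{j<i}A_j$, then use $d_i$ when exactly two vertices sit at the top level and $c_i$ when all three do. You in fact spell out the closure argument (two summands in the subgroup, then invert the relevant $g^i_j$) that the paper leaves implicit, and you correctly track the first/second-coordinate bookkeeping of the amalgamation.
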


\begin{proof}
Let $\langle I, \leq, A_i, c_i, d_i\rangle$ be a coherent sequence of colorings. Write $Y=\bigcup_iA_i$. For each $x\in X$ write $i(x)$ for the least $i\in I$ such that $x\in A$. Let $e$ be the amalgamation of the sequence. Let $a\subset X$ be a $\Gamma$-hyperedge and work to show that $e\restriction a$ is not constant. Let $i$ be the $\leq$-largest element of the finite set $\{i(x)\colon x\in a\}$. There are three configurations to consider; their discussion completes the proof.

\noindent\textbf{Case 1.} There is exactly one point $x\in a$ such that $i(x)=i$. This is in fact impossible, since the subgroup $\bigcup_{j<i}A_j$ is $\Gamma$-closed.

\noindent\textbf{Case 2.} There are exactly two points $x\in a$ such that $i(x)=i$. In this case, these two points are $\Delta_i$-connected and they receive distinct $d_i$-colors, as $d_i$ is a $\Delta_i$-coloring. In consequence, they also receive distinct $e$-colors.

\noindent\textbf{Case 3.} All three points $x\in a$ have $i(x)=i$. In this case, $a$ is not $c_i$-monochromatic, since $c_i$ is a $\Gamma$-coloring. As a result, $a$ is not $e$-monochromatic either.
\end{proof}

\begin{corollary}
\label{chromaticcorollary2}
Let $\langle X, +\rangle$ be a Polish abelian group. Let $\Gamma$ be a countable union of countably many slim linear hypergraphs of arity three on $X$. The chromatic number of $\Gamma$ is countable.
\end{corollary}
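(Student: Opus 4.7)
The plan is to apply Proposition~\ref{amalgamationproposition} to a well-chosen coherent sequence whose union is $X$. The only obstacle is that Definition~\ref{coherentdefinition} requires a $\Gamma$-coloring $c_\alpha\colon A_\alpha\to\hefi$ on each $A_\alpha$, and the $A_\alpha$'s encountered in a straightforward enumeration of $X$ can themselves be uncountable. This pushes me to prove, by transfinite induction on cardinality, the stronger statement that every $\Gamma$-closed subset $A\subseteq X$ admits a $\Gamma$-coloring $A\to\hefi$; the corollary follows by taking $A=X$, which is trivially $\Gamma$-closed.

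If $|A|\leq\aleph_0$, any injection $A\to\hefi$ is such a coloring. For the inductive step, let $\kappa$ be an uncountable cardinal, assume the claim below $\kappa$, and let $A$ be $\Gamma$-closed with $|A|=\kappa$. Using AC, enumerate $A=\{x_\alpha\colon\alpha<\kappa\}$, and for each ordinal $\alpha\leq\kappa$ let $A_\alpha$ be the smallest $\Gamma$-closed subset of $X$ containing $\{x_\beta\colon\beta<\alpha\}$. The chain $\langle A_\alpha\colon\alpha\leq\kappa\rangle$ is inclusion-increasing and continuous at limits, with $A_\kappa\supseteq A$. Since $\Gamma$ is a countable union of slim linear hypergraphs, $\Gamma$-closure inflates an infinite cardinality by at most a factor of $\aleph_0$, so $|A_\alpha|\leq|\alpha|+\aleph_0<\kappa$ for every $\alpha<\kappa$.

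Let $I$ consist of those $\alpha\leq\kappa$ that are $0$ or successor ordinals, linearly ordered by $\in$. Continuity at limits ensures every $x\in A$ has a smallest $\alpha\in I$ with $x\in A_\alpha$, fulfilling clause~(3) of Definition~\ref{coherentdefinition}. For each $\alpha\in I$, the inductive hypothesis yields a $\Gamma$-coloring $c_\alpha\colon A_\alpha\to\hefi$. Moreover, $\bigcup_{\beta<\alpha,\,\beta\in I}A_\beta$ is $\Gamma$-closed, being the union of an increasing chain of $\Gamma$-closed sets, so Corollary~\ref{chromaticcorollary1} produces a countable coloring of its remainder graph; its restriction to $A_\alpha\setminus\bigcup_{\beta<\alpha}A_\beta$ serves as $d_\alpha$. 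The resulting tuple is a coherent sequence of colorings, and its amalgamation, by Proposition~\ref{amalgamationproposition} a $\Gamma$-coloring $A\to\hefi$, completes the induction.

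The main point to double check is the bound $|A_\alpha|<\kappa$, which crucially uses that $\kappa$ is an uncountable cardinal rather than merely an ordinal. Otherwise the argument is bookkeeping on top of Corollary~\ref{chromaticcorollary1} and Proposition~\ref{amalgamationproposition}.
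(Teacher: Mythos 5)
Your proposal is correct and follows essentially the same route as the paper: transfinite induction on the cardinality of a $\Gamma$-closed set, writing it as an increasing union of smaller $\Gamma$-closed sets, taking $c_\alpha$ from the induction hypothesis and $d_\alpha$ from Corollary~\ref{chromaticcorollary1}, and amalgamating via Proposition~\ref{amalgamationproposition}. Your extra care with clause~(3) of Definition~\ref{coherentdefinition} and with the cardinality bound $|A_\alpha|<\kappa$ only makes explicit what the paper leaves implicit.
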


\begin{proof}
By induction on the cardinality of a $\Gamma$-closed set $A\subset X$ I will argue that there is a $\Gamma$-coloring from $A$ to $\hefi$. This is clear if $|A|=\aleph_0$, since one can choose any injection as the coloring. Suppose now that $\kappa$ is an uncountable cardinal, $|A|=\kappa$, and the statement has been proved for all $\Gamma$-closed sets of cardinality smaller than $\kappa$. Express $F=\bigcup_{\gb\in\ga}F_\gb$ as an increasing union of $\Gamma$-closed sets of cardinality smaller than $\kappa$. Use the induction hypothesis to find a $\Gamma$-coloring $c_\gb\colon A_\gb\to\hefi$ and Example~\ref{criticalexample} to find a $\Delta(\bigcup_{\gamma\in\gb}A_\gamma)$-coloring $d_\gb\colon A_\gb\setminus\bigcup_{\gamma\in\gb}A_\gamma\to\hefi$, this for every $\gb\in\ga$. Let $c\colon A\to\hefi$ be the amalgamation of $\langle c_\gb, d_\gb\colon\gb\in\ga\rangle$. Proposition~\ref{amalgamationproposition} confirms that this is a $\Gamma$-coloring, completing the induction step and the proof.
\end{proof}

\section{C.c.c.\ coloring posets}
\label{cccsection}

Another issue seemingly unrelated to the consistency result in question is the possibility of forcing colorings of given (hyper)graphs by finite conditions. If $\Gamma$ is a hypergraph on a set $X$ of countable chromatic number, there may not be a coloring definable from $\Gamma$--this is the basic difficulty of descriptive combinatorics. However, there still may be a definable poset adding a coloring which has useful regularity properties. The poset I use in this paper is the simplest one:

\begin{definition}
Let $\Gamma$ be a hypergraph on a set $X$. The \emph{finite approximation coloring poset} $R(\Gamma)$ is the poset of all partial finite colorings $p\colon X\to\gw$, ordered by reverse inclusion.
\end{definition}

\noindent The countable set used as the target of the colorings is of course immaterial; I may use the set $\hefi$ instead of $\gw$. The property of the coloring posets critical for this paper is the following.

\begin{definition}
Let $R$ be a poset.

\begin{enumerate}
\item A set $A\subset R$ is \emph{Ramsey-centered} if for every $n\in\gw$ there is $m\in\gw$ such that for every $m$-tuple $\langle r_i\colon i\in m\rangle$ of elements of $A$ there is a set $b\subset m$ of cardinality $n$ such that the set $\{r_i\colon i\in b\}$ has a common lower bound in $R$.
\item if $z$ is a set, then $R$ is $z$-\emph{definably $\gs$-Ramsey-centered} if $R$ can be covered by an $\gw$-sequence of Ramsey-centered sets which is defined from $z$
\end{enumerate}
\end{definition}

\noindent There are several situations in which I can show that the finite approximation coloring poset is suitably definably Ramsey-centered. Some are exhibited in \cite[Section 11.6]{z:geometric}. The cases I need in this paper are recorded in the following two similar theorems with similar proofs.

\begin{theorem}
\label{1theorem}
Let $X$ be a set. Let $\Gamma$ be a hypergraph of arity three on $X$ such that there is a number $d\in\gw$ such that for every $x_0, x_1\in X$ the set $\{x_2\in X\colon \{x_0, x_1, x_2\}\in\Gamma\}$ has cardinality at most $d$. Then the finite approximation coloring poset for $\Gamma$ is $X, \Gamma$-definably $\gs$-Ramsey centered.
\end{theorem}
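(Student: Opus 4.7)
The plan is to cover $R(\Gamma)$ by countably many ``color-type'' classes definable from $X$, and to verify Ramsey-centeredness of each class via a $\Delta$-system reduction followed by iterated applications of Ramsey's theorem. The pair-link hypothesis $d$ is exactly what forces the ``bad'' cliques in the Ramsey extractions to be bounded.

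For each $n\in\gw$ and each sequence $\vec c \in \hefi^n$, let $A_{\vec c}$ be the set of conditions $p\in R(\Gamma)$ with $|\dom(p)|=n$ admitting an enumeration $\dom(p)=\{x_0,\dots,x_{n-1}\}$ satisfying $p(x_i)=c_i$ for each $i<n$. The sequence $\langle A_{\vec c}: \vec c\in \hefi^{<\gw}\rangle$ is countable, covers $R(\Gamma)$, and is definable from $X$. Now fix a type $\vec c$ of length $n$ and a target size $k$. Given an $m$-tuple from $A_{\vec c}$ with specified enumerations $\langle x_i^\alpha\rangle$, I first apply the $\Delta$-system lemma together with a pigeonhole on enumeration positions to pass to a sunflower sub-family, losing at most a factor depending on $n$: there is a fixed ``root position'' set $S\subseteq n$ such that the $x_i^\alpha$ for $i\in S$ agree across the sub-family while the outside parts $O_\alpha=\{x_i^\alpha:i\notin S\}$ are pairwise disjoint.

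Because each $p_\alpha$ is itself a valid coloring, any monochromatic $\Gamma$-hyperedge appearing in a union $\bigcup_{\alpha\in F}p_\alpha$ must cross several conditions; a case analysis on how a triple can straddle the sunflower gives three cross-types: (A) one vertex in the root, one in each of two distinct $O_\alpha, O_\beta$; (B) two vertices in a single $O_\beta$, one in a distinct $O_\alpha$; (C) one vertex in each of three distinct $O_\alpha, O_\beta, O_\gamma$. For each cross-type and each fixed tuple of enumeration positions satisfying the color constraint, I would apply Ramsey's theorem (on pairs for (A) and (B), on triples for (C)) with ``bad hyperedge present'' as one color. The essential input is that the bad-color cliques are bounded uniformly by a function of $d$: for example, in type (B) with positions $(p,q,r)$ and orientation $\alpha\to\beta$, fixing any $\beta$ in a bad clique $C$ and setting $(a,b):=(x_q^\beta,x_r^\beta)$ forces $x_p^\alpha\in\{z\in X:\{z,a,b\}\in\Gamma\}$ for every other $\alpha\in C$, so by the pair-link bound and distinctness of the $x_p^\alpha$ one obtains $|C|\leq d+1$. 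Entirely analogous pair-link arguments bound the bad cliques for (A) and (C) by $O(d)$. After $O(n^3)$ Ramsey extractions, each shrinking the sub-family only by a fixed Ramsey number, the remaining sub-family has size at least $k$ and is free of all cross-type monochromatic hyperedges; any $k$ of its elements then have compatible enumerations and a valid union, giving a common lower bound in $R(\Gamma)$, with the final $m$ an explicit function of $n, k, d$.

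The main obstacle I expect is the clique bound in the Ramsey step. Without the pair-link hypothesis, the bad color could hold for every pair of the sub-family (since the set of pairs completing a monochromatic hyperedge with a fixed vertex can be infinite, as happens already for natural $\Z$-examples), and the Ramsey extraction would collapse. The pair-link bound is exactly what converts the combinatorial problem into a tractable iterated Ramsey argument with uniformly bounded bad cliques.
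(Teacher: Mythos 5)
Your argument is correct, and it isolates the right crux: the hypothesis that any two points have at most $d$ common $\Gamma$-neighbours is used precisely to bound the ``bad'' homogeneous sets, which is exactly the role it plays in the paper. The difference is one of packaging rather than substance. The paper covers $R(\Gamma)$ by the coarser classes $A_{kl}$ (domain of size $k$, range contained in $l$) and makes a \emph{single} application of $m\to(n)^3_{9k+1}$ to a colouring of triples of conditions that simultaneously encodes three failure modes: failure of the $\Delta$-system property, disagreement of colours on the common part, and a hyperedge with exactly one vertex outside the common part. Homogeneity in the OK colour then yields the sunflower structure, the compatibility of the functions, and the absence of cross hyperedges all at once, and each bad homogeneous colour is excluded by a pigeonhole count (the hypotheses $n>d\cdot[2k]^2$ and $n>3l$ playing the role of your clique bounds). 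You instead extract the sunflower up front via the finite $\Delta$-system lemma plus a pigeonhole on root positions, dispose of colour disagreements on the root by refining the cover to colour-types $A_{\vec c}$, and then run one two-colour Ramsey extraction per cross-type and position tuple; your case analysis (A)/(B)/(C) of cross hyperedges is exhaustive, and the clique bounds $\le d+1$, $\le d+1$, $\le d+2$ obtained by fixing the extreme element of a putative bad clique and invoking the pair-link bound are correct (in type (B) you should fix $\beta$ to be the element on the ``receiving'' side of the orientation, i.e.\ the maximum for the orientation $\alpha\to\beta$). The cost of your version is an iterated composition of Ramsey numbers rather than a single one, which is harmless. One small piece of bookkeeping to make explicit: the $m$-tuples in the definition of Ramsey-centredness may contain repeated conditions, and distinct conditions in $A_{\vec c}$ may share a domain (with permuted enumerations), so the sunflower lemma does not apply verbatim to the multiset of domains. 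First pigeonhole so that either one condition repeats $n$ times (in which case the corresponding index set works trivially, since the lower bound is taken for the \emph{set} of conditions) or you have many pairwise distinct conditions; since at most $n!$ distinct conditions of a fixed colour-type share a given domain, this yields many distinct domains and the sunflower extraction proceeds as you describe.
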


\begin{proof}
Let $A_{kl}\subset R(\Gamma)$ be the set of all conditions whose domain has cardinality $k\in\gw$ and the range is a subset of $l\in\gw$. It will be enough to show that $A_{kl}$ is Ramsey-centered.

To this end, let $n\in\gw$ be a number. Increasing $n$ if necessary I may assume that $n>d\cdot [2k]^2$ and $n>3l$. Let $m$ be a number such that $m\to (n)^3_{9k+1}$. It will be enough to show that among any $m$ many elements of $A_i$ one can find $n$ many with a common lower bound. To this end, let $\{p_i\colon i\in m\}\subset A_k$ be a set. For each $i\in m$, fix an enumeration $\{x_j^i\colon j\in k\}$ of its domain. Consider the following map $f$ on $[m]^3$. If $u\in [m]^3$ is a set, let $i_0<i_1<i_2$ be its elements in increasing order and define $f(u)$ by a split into cases:

\begin{itemize}
\item if there is an index $b\in 3$ and an index $j\in k$ such that $x_j^{i_b}$ belongs to at least one of $\dom(p_{i_c})$ for $c\neq b$ and it is not equal to at least one $x_j^{i_c}$ for $c\neq b$, then the functional value will be $f(u)=\langle 0, b, j\rangle$ for some such $b$ and $j$;
\item if the previous item fails and there are $b\in 3$ and $j\in k$ such that for some $c\in 3$ distinct from $b$, $x_j^{i_b}=x_j^{i_c}$ and $p_{i_b}(x_j^{i_b})\neq p_{i_c}(x_j^{i_c})$, then $f(u)=\langle 1, b, j\rangle$ for some such $b, j$;
\item if the previous items fail and there are $b\in 3$ and $j\in k$ such that there is a hyperedge $e\in\Gamma$ which is contained in $\bigcup_{i\in u}\dom(p_i)$ and such that $x_j^{i_b}$ is the only element of $e$ which is not in $\dom(p_{i_c})$ for any $c\neq b$, then let $f(u)=\langle 2, b, j\rangle$ for some such $b, j$;
\item if all of the previous items fail then $f(u)=$OK.
\end{itemize}

\noindent By the Ramsey assumption on the number $m$, there is a set $a\subset m$ of cardinality $n$ which is homogeneous for the coloring $f$. I will argue that the homogeneous color is OK, the sets $\dom(p_i)$ for $i\in a$ form a $\Delta$-system, $\bigcup_{i\in a}p_i$ is a function, and every $\Gamma$-edge in $\dom(\bigcup_{i\in a}p_i)$ is a subset of $\dom(p_i)$ for some $i\in a$. That edge will not be monochromatic since $p_i$ is a $\Gamma$-coloring. In conclusion, $\bigcup_{i\in a}p_i$ is a common lower bound in $R(\Gamma)$ of the conditions $p_i$ for $i\in a$. This will conclude the proof of the theorem. I proceed in a sequence of claims.

\begin{claim}
The homogeneous color is not $\langle 0, b, j\rangle$ for any $b\in3$ and $j\in k$.
\end{claim}

\begin{proof}
Suppose towards a contradiction that this occurs; for definiteness, assume that $b=2$. Then there are only $2k$ many options for $x_j^i$ for $i\in a$ (they all must come from the first two elements of the set $a$), so they must repeat, and there must be a set $u\in [a]^3$ such that the value of $x_j^i$ for $i\in u$ does not depend on $i$. This contradicts the assumption that $f(u)=\langle 0, b, j\rangle$.
\end{proof}

\noindent It follows that writing $G=\{j\in k\colon$ the value of $x_j^i$ does not depend on $i\in a\}$, the sets $\dom(p_i)$ for $i\in a$ form a $\Delta$-system with heart $H=\{x_j^i\colon i\in a, j\in G\}$. To see this, note that if $i_0\neq i_1$ were two distinct elements of $a$ and $j\in k$ were such that $x_j^{i_0}\in \dom(p_{i_1})\setminus\{x_j^{i_1}\}$, then this would trigger the first item above for any $u\in [a]^3$ containing both elements $i_0, i_1$, contradicting the claim.

\begin{claim}
The homogeneous color is not $\langle 1, b, j\rangle$ for any $b\in 3$ and $j\in k$.
\end{claim}

\begin{proof}
If it were, $j\in G$ would have to occur. There are only $l$ many colors available for $p_i(x_j^i)$ for $i\in a$, which means that one of them has to repeat at least three times. Let $u\in [a]^3$ be such that $p_i(x_j^i)$ for $i\in u$ does not depend on $i$. Then $f(u)=\langle 1, b, j\rangle$ must fail, a contradiction.
\end{proof}

\noindent It follows that $\bigcup_{i\in a}p_i$ is a function. Any disagreement of distinct conditions on the common part $H$ of their domain would trigger the second item above for some set $u\in [a]^3$, contradicting the claim.

\begin{claim}
The homogeneous color is not $\langle 2, b, j\rangle$ for any $b\in 3$ and $j\in k$.
\end{claim}

\begin{proof}
Suppose towards a contradiction that this occurs; for definiteness, assume that $b=2$. Then there are at most $d\cdot [2k]^2$ many options for the value of $x_j^i$ as $i$ varies over all elements of $a$ by the assumption on the hypergraph $\Gamma$. This means that these values have to repeat, and there will be a set $u\in [a]^3$ such that $x_j^i$ is the same for all $i\in u$. Such $u$ cannot have $f(u)=\langle 2, b, j\rangle$.
\end{proof}

\noindent Now, let $e$ be a $\Gamma$-hyperedge in $\dom(\bigcup_{i\in a}p_i)$; I need to argue that there is a single $i\in a$ such that $e\subset\dom(p_i)$. Otherwise, there would be $i\in a$ such that $e\cap\dom(p_i)\setminus H$ is a singleton, which would trigger the third item for any set $u\in [a]^3$ containing $i$ and the other two indices needed to reconstruct $e$, contradicting the claim. This completes the proof of the theorem. 
\end{proof}

\begin{theorem}
\label{2theorem}
Let $X$ be a set and let $\Gamma$ be a graph such that for every $x_0\in X$, the set $\{x_1\in X\colon \{x_0, x_1\}\in\Gamma\}$ is finite. Then $R(\Gamma)$ is $X, \Gamma$-definably Ramsey-$\gs$-centered.
\end{theorem}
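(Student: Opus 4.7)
The plan is to closely mimic the proof of Theorem~\ref{1theorem}, replacing the ternary Ramsey coloring on $[m]^3$ with a binary Ramsey coloring on $[m]^2$. The main new difficulty is that for graphs one only has local finiteness of $\Gamma$, not a uniform bound on vertex degrees analogous to the number $d$ in Theorem~\ref{1theorem}. I accommodate this by refining the stratification of $R(\Gamma)$: let $A_{k,l,D}$ be the set of conditions $p\in R(\Gamma)$ with $|\dom(p)|=k$, $\rng(p)\subseteq l$, and $\sum_{x\in\dom(p)}|N_\Gamma(x)|\leq D$, where $N_\Gamma(x)$ denotes the $\Gamma$-neighborhood of $x$. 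Since each $N_\Gamma(x)$ is finite by hypothesis, the sequence $\langle A_{k,l,D}\colon k,l,D\in\gw\rangle$ is a countable, $X,\Gamma$-definable cover of $R(\Gamma)$, and within each $A_{k,l,D}$ the number $D$ plays the role played by $d\cdot[2k]^2$ in the previous proof.

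Fix $k,l,D$, pick $n>k+l+D+2$, and choose $m$ by Ramsey's theorem so that $m\to(n)^2_{6k+1}$. Given $\{p_i\colon i\in m\}\subseteq A_{k,l,D}$ with fixed enumerations $\dom(p_i)=\{x_j^i\colon j\in k\}$, define a coloring $f\colon [m]^2\to\{\mathrm{OK}\}\cup\{\langle t,b,j\rangle\colon t\in 3,\, b\in 2,\, j\in k\}$ by direct analog of the one in Theorem~\ref{1theorem}: for $\{i_0,i_1\}$ with $i_0<i_1$, assign $\langle 0,b,j\rangle$ if $x_j^{i_b}\in\dom(p_{i_{1-b}})\setminus\{x_j^{i_{1-b}}\}$; else $\langle 1,b,j\rangle$ if $x_j^{i_b}=x_j^{i_{1-b}}$ with $p_{i_b}(x_j^{i_b})\neq p_{i_{1-b}}(x_j^{i_{1-b}})$; else $\langle 2,b,j\rangle$ if $x_j^{i_b}$ is the unique vertex of some $\Gamma$-edge in $\dom(p_{i_0})\cup\dom(p_{i_1})$ lying outside $\dom(p_{i_{1-b}})$; and $\mathrm{OK}$ otherwise. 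Extract a homogeneous $a\subseteq m$ of size $n$.

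The bulk of the work is ruling out every non-$\mathrm{OK}$ homogeneous color. The cases $\langle 0,b,j\rangle$ and $\langle 1,b,j\rangle$ are handled by pigeonhole exactly as in Theorem~\ref{1theorem}, using $n>k$ and $n>l$ respectively. The principal obstacle, and the reason for introducing the parameter $D$, is case $\langle 2,b,j\rangle$: assuming $b=1$ for definiteness, for every $\{i_0,i_1\}\in[a]^2$ with $i_0<i_1$ the vertex $x_j^{i_1}$ is forced to be a $\Gamma$-neighbor of some element of $\dom(p_{i_0})$. Fixing $i_0=\min a$, all $n-1$ candidates $\{x_j^{i_1}\colon i_1\in a,\ i_1>i_0\}$ lie in $\bigcup_{z\in\dom(p_{i_0})}N_\Gamma(z)$, a set of size at most $D$; the choice $n>D+1$ forces a repetition $x_j^{i_1}=x_j^{i_1'}$, and then the pair $\{i_1,i_1'\}$ has $x_j^{i_1'}\in\dom(p_{i_1})$, contradicting the $\langle 2,1,j\rangle$ clause. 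The case $b=0$ is symmetric, fixing $i_1=\max a$. Once $\mathrm{OK}$ is forced, the standard $\Delta$-system, heart-coherence, and no-crossing-edge analysis (verbatim from the closing paragraphs of Theorem~\ref{1theorem}, but with triples replaced by pairs) shows that $\bigcup_{i\in a}p_i$ is a legitimate $\Gamma$-coloring and hence a common lower bound in $R(\Gamma)$, which proves that $A_{k,l,D}$ is Ramsey-centered.
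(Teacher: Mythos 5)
Your proof is correct and follows essentially the same route as the paper's: the paper also stratifies $R(\Gamma)$ by domain size, range bound, and a degree bound (it uses a per-vertex bound $d$ where you use a total bound $D$, an immaterial difference), and then runs the same binary analog of the Theorem~\ref{1theorem} coloring with $m\to(n)^2_{6k+1}$. Your explicit pigeonhole argument for excluding the homogeneous color $\langle 2,b,j\rangle$ correctly fills in the step the paper dispatches with ``as in the proof of Theorem~\ref{1theorem}.''
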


\begin{proof}
For each $k, l, d\in\gw$ let $A_{kld}=\{p\in R(\Gamma)\colon |\dom(p)|=k, \rng(p)\subset l$, and each point in $\dom(p_i)$ has at most $d$-many neighbors$\}$. It will be enough to show that each set $A_{kld}$ is Ramsey-centered.

To this end, let $k, l, d, n\in\gw$ be arbitrary. Increasing the number $n$ if necessary, I may assume that $n>d+2, k+2$. Let $m\in\gw$ be a number such that $m\to (n)^2_{6k+1}$. It will be enough to show that among any $m$ many elements of $A_{kld}$ one can find $n$ many with a common lower bound, i.e. such that their union is still a partial $\Gamma$-coloring.

To do that, let $\{p_i\colon i\in m\}\subset A_{kld}$ be any set. For each $i\in m$, fix an enumeration $\langle x_j^i\colon i\in k\rangle$ of $\dom(p_i)$. Consider the following map $f$ on $[m]^2$. If $u\in [m]^2$ is a set, let $i_0<i_1$ be its elements in increasing order and define $f(u)$ by a split into cases:

\begin{itemize}
\item if there is an index $b\in 2$ and an index $j\in k$ such that $x_j^{i_b}\neq x_j^{i_{1-b}}$ yet $x_j^{i_b}\in v_{i_{1-b}}$, then the functional value will be $f(u)=\langle 0, b, j\rangle$ for some such $b$ and $j$;
\item if the previous item fails and there is $j\in k$ such that $x_j^i$ does not depend on $i\in u$, yet the value $p_i(x_j^i)$ is different for the two numbers $i\in u$, then $f(u)=\langle 1, 0, j\rangle$ for some such $j$;
\item if the previous items fail and there are $b\in 2$ and $j\in k$ such that the point $x_j^{i_b}\notin\dom(p_{i_{1-b}})$ is $\Gamma$-connected with some point in $\dom(p_{i_{1-b}})\setminus\dom(p_{i_b})$, then let $f(u)=\langle 2, b, j\rangle$ for some such $b, j$;
\item if all of the previous items fail then $f(u)=$OK.
\end{itemize}

\noindent Use the Ramsey property of $m$ to find a set $a\subset m$ of cardinality $n$ which is homogeneous for $f$. As in the proof of Theorem~\ref{1theorem}, one can argue that the sets $\dom(p_i)$ for $i\in a$ form a $\Delta$-system, $\bigcup_{i\in a}p_i$ is a function, and every $\Gamma$-edge in $\dom(\bigcup_{i\in a}p_i)$ is a subset of $\dom(p_i)$ for some $i\in a$. That edge will not be monochromatic since $p_i$ is a $\Gamma$-coloring. In conclusion, $\bigcup_{i\in a}p_i$ is a common lower bound in $R(\Gamma)$ of the conditions $p_i$ for $i\in a$.
\end{proof}

\section{A balanced coloring poset}
\label{balancedsection}

Towards the proof of Theorem~\ref{maintheorem}, I must find a balanced forcing which generates the desired model of ZF+DC over the choiceless Solovay model. I use this opportunity to isolate a rather general scheme for coloring ternary hypergraphs of the following form:

\begin{definition}
\label{posetdefinition}
Let $\langle X, +\rangle$ be a standard Borel abelian group and $\Gamma$ a finite union of slim linear ternary hypergraphs on $X$. Let $C=\bigcup_n C_n$ be a countable set decomposed into infinitely many infinite sets.
The coloring poset $P$ consists of all $p$ such that there is a countable $\Gamma$-closed set $\dom(p)\subset X$ such that $p\colon\dom(p)\to C$ is a $\Gamma$-coloring. The ordering is defined by $q\leq p$ if $p\subseteq q$ and for every $\dom(p)$-orbit $a\subset\dom(q)$ there is a number $m\in\gw$ such that $q''a\subset\bigcup_{n\in m}C_m$.
\end{definition}

\noindent For the last condition in the definition, note that the group $\dom(p)$ acts on $X\setminus\dom(p)$ by addition, and $\dom(q)\setminus\dom(p)$ is a set invariant under this action. The choice of sets $C_n$ for $n\in\gw$ is clearly immaterial; for convenience, below I will set $C=\hefi\setminus\{0\}$ and $C_n=\{c\in C\colon |c|=n+1\}$.

\begin{proposition}
$P$ is an analytic, transitive, $\gs$-closed relation.
\end{proposition}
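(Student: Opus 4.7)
The plan is to verify the three assertions in turn. Analyticity is routine coding; the substance is a single lemma about cosets that drives both transitivity and $\sigma$-closure.

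For analyticity, a condition $p \in P$ is coded by an enumeration of $\dom(p)$ as a sequence in $X^\gw$ together with a function from the range of that enumeration to $C$. The clauses ``$\dom(p)$ is a subgroup closed under the finitely many homomorphisms $g_j^i$ and the inverses of $g_j^i+g_k^i$'' and ``$p$ is a $\Gamma$-coloring'' are Borel in such codes, and the ordering $q \leq p$ is Borel in codes for $p$ and $q$ (containment is Borel, and the final clause is a countable conjunction over orbits of a Borel condition). Existentially quantifying over codes presents $P$ and its ordering as analytic.

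The key observation for both remaining items is the elementary group-theoretic fact that if $H \leq K$ are subgroups of $X$, then every $H$-coset in $X$ is either contained in $K$ or disjoint from $K$, and in the latter case it is contained in a single $K$-coset. For transitivity, suppose $r \leq q \leq p$ and let $a$ be a $\dom(p)$-orbit in $\dom(r)\setminus\dom(p)$. Apply the fact with $H = \dom(p)$ and $K = \dom(q)$: if $a \subseteq \dom(q)$, then $r$ agrees with $q$ on $a$ and the bound witnessing $q \leq p$ on the orbit $a$ serves; otherwise $a$ is disjoint from $\dom(q)$ and sits inside a single $\dom(q)$-orbit $a' \subseteq \dom(r)\setminus\dom(q)$, and the bound witnessing $r \leq q$ on $a'$ restricts to the required bound on $a$.

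For $\sigma$-closure, given a descending sequence $p_0 \geq p_1 \geq \cdots$, the candidate lower bound is $p = \bigcup_n p_n$. Its domain is a countable increasing union of $\Gamma$-closed sets, hence $\Gamma$-closed by the remark following the definition of $\Gamma$-closed set; any $\Gamma$-edge in $\dom(p)$ is finite and thus already lies in some $\dom(p_n)$, so $p$ inherits the coloring property from $p_n$. To check $p \leq p_n$, take a $\dom(p_n)$-orbit $a \subseteq \dom(p)\setminus\dom(p_n)$; pick any $x \in a$ and choose $k \geq n$ with $x \in \dom(p_k)$. By the coset fact applied to $\dom(p_n) \leq \dom(p_k)$, the entire orbit $a$ is contained in $\dom(p_k)$, and the bound witnessing $p_k \leq p_n$ on $a$ is the desired bound for $p$ on $a$.

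I expect no serious obstacle. The only place to be careful is precisely the one that justifies the coset fact: one must use that each $\dom(p_n)$ is a subgroup, so that a $\dom(p_n)$-orbit cannot be split between different levels of the sequence, and similarly that $\dom(p)\subseteq\dom(q)$ as subgroups in the transitivity argument.
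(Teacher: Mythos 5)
Your proposal is correct and follows essentially the same route as the paper: the transitivity argument is the identical two-case split (the $\dom(p)$-orbit $a$ either lies inside $\dom(q)$, where $r$ agrees with $q$ and the bound from $q\leq p$ serves, or is disjoint from $\dom(q)$ and sits in a single $\dom(q)$-orbit, where the bound from $r\leq q$ serves), driven by the same coset observation. For $\gs$-closure the paper simply declares the union of a descending sequence to be a lower bound; your verification that each $\dom(p_n)$-orbit in the union is swallowed by some single $\dom(p_k)$ is exactly the detail being suppressed, and it is right.
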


\begin{proof}
The analyticity of $P$ is clear. For the transitivity, assume that $r\leq q\leq p$ are conditions in $P$, and argue that $r\leq p$ must hold. For this, suppose that $a\subset\dom(r)\setminus\dom(p)$ is a $\dom(p)$-orbit; I must show that there is a number $m\in\gw$ such that $q''a\subset\bigcup_{n\in m}C_m$. There are two cases. Either, $a$ contains some element of $\dom(q)$. In this case, $a\subset\dom(q)$ holds, and the existence of the number $m$ follows from $q\leq p$. Or, $a\cap\dom(q)=0$. In this case, $a$ is a subset of a single $\dom(q)$-orbit and the existence of the number $m$ follows from $r\leq q$.

For the $\gs$-closure, it is clear that if $\langle p_n\colon n\in\gw\rangle$ is a descending sequence of conditions, then $\bigcup_np_n$ is its lower bound.
\end{proof}

\begin{proposition}
For every $p\in P$ and every $x\in X$ there is $q\leq p$ such that $x\in\dom(q)$.
\end{proposition}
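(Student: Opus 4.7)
The plan is to extend $p$ by a single step of the amalgamation construction from Section~\ref{remaindersection}, performed over the $\Gamma$-closure of $\dom(p)\cup\{x\}$, and to exploit the Kuratowski coding of ordered pairs so that the bounded-cardinality requirement in the ordering of $P$ is satisfied automatically.

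First I reduce to the nontrivial case by setting $q=p$ if $x\in\dom(p)$; otherwise, I let $A$ be the smallest $\Gamma$-closed subgroup of $X$ containing $\dom(p)\cup\{x\}$. By the remarks following the definition of $\Gamma$-closed set, $A$ is countable. Then I apply Corollary~\ref{chromaticcorollary2} to pick any $\Gamma$-coloring $c\colon A\to\hefi$, and I let $d\colon A\setminus\dom(p)\to\hefi$ be any injection, which is trivially a coloring of the remainder graph $\Gamma_{\dom(p)}$ restricted to $A\setminus\dom(p)$. The candidate $q\colon A\to C$ is defined by $q\restriction\dom(p)=p$ and $q(y)=\langle c(y),d(y)\rangle$ for $y\in A\setminus\dom(p)$. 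The tuple $\langle\{0,1\},\leq,\dom(p),A,p,c,0,d\rangle$ is a coherent sequence of colorings in the sense of Definition~\ref{coherentdefinition}, and $q$ is precisely its amalgamation, so Proposition~\ref{amalgamationproposition} certifies that $q$ is a $\Gamma$-coloring of the $\Gamma$-closed countable set $A$.

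Finally I verify $q\leq p$. For every $y\in A\setminus\dom(p)$, the Kuratowski pair $q(y)=\{\{c(y)\},\{c(y),d(y)\}\}$ has cardinality at most $2$ as a hereditarily finite set, so $q(y)\in C_0\cup C_1$. Consequently, any $\dom(p)$-orbit $a\subset\dom(q)\setminus\dom(p)$ satisfies $q''a\subset\bigcup_{n\in 2}C_n$ with the uniform witness $m=2$, which is exactly the boundedness condition in the ordering of $P$. Since $x\in A=\dom(q)$, this $q$ is the desired extension. The only point worth singling out is the Kuratowski-coding trick that forces bounded cardinality for free; the coloring property of $q$ is a direct invocation of the amalgamation machinery from the previous section, so I do not anticipate any real obstacle.
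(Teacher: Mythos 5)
Your proof is correct and follows essentially the same route as the paper: extend $\dom(p)$ to a countable $\Gamma$-closed set containing $x$, color the new points via an injection (so that the $\Gamma$-closure of $\dom(p)$ rules out hyperedges with exactly one new point), and observe that all new colors lie in a fixed finite union of the $C_n$, which gives $q\leq p$. The only difference is cosmetic: you route the coloring verification through Definition~\ref{coherentdefinition} and Proposition~\ref{amalgamationproposition}, whereas the paper argues directly with a single injection into $C_0$; both are valid.
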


\begin{proof}
First, choose $\dom(q)$ to be an arbitrary countable $\Gamma$-closed set containing $\dom(p)$ as a subset and $x$ as an element. Let $q\colon \dom(q)\to C$ be a function such that $q\restriction\dom(p)=p$ and $q\restriction\dom(q)\setminus\dom(p)$ is an injection whose range is a subset of $C_0$. Then $q$ is a $\Gamma$-coloring: any hyperedge $e\subset\dom(q)$ is either contained in $\dom(p)$ and not monochromatic as $p\in P$, or it contains more than one element of $\dom(q)\setminus\dom(p)$ by the $\Gamma$-closure of $\dom(p)$, and it is not monochromatic as $q\setminus p$ is an injection. Since $q\leq p$ holds, the proposition follows. 
\end{proof}

\begin{proposition}
\label{balanceproposition}
Let $\bar p\colon X\to C$ be a total $\Gamma$-coloring. Let $V[G_0]$ and $V[G_1]$ be generic extensions such that $V[G_0]\cap V[G_1]=V$. Let $p_0\in P\cap V[G_0]$ and $p_1\in P\cap V[G_1]$ be conditions such that $p_0\leq\bar p$ and $p_1\leq\bar p$. Then the conditions $p_0, p_1$ are compatible in $P$.
\end{proposition}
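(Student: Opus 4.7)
The plan is to build a common lower bound $q$ of $p_0,p_1$ in $P$ by restricting the total coloring $\bar p$ to a countable $\Gamma$-closed superset of $\dom(p_0)\cup\dom(p_1)$.

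First, I would let $D\subseteq X$ be the smallest $\Gamma$-closed subset containing $\dom(p_0)\cup\dom(p_1)$. Since each $\dom(p_i)$ is countable, the remark following the definition of $\Gamma$-closedness guarantees that $D$ is countable. Set $q:=\bar p\restriction D$. Because $\bar p$ is a $\Gamma$-coloring of $X$ and $D$ is $\Gamma$-closed and countable, $q$ is a condition of $P$.

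Next I would verify $q\leq p_0$; the verification of $q\leq p_1$ is symmetric. Two things must be checked: $p_0\subseteq q$, and for every $\dom(p_0)$-orbit $a\subseteq\dom(q)$, the set $q''a$ is contained in $\bigcup_{n<m}C_n$ for some $m\in\gw$. Under the natural reading of the virtual-condition relation $p_0\leq\bar p$ — namely, $p_0\subseteq\bar p$ together with the orbit bound $\bar p''O\subseteq\bigcup_{n<m}C_n$ for some $m$ on each $\dom(p_0)$-orbit $O\subseteq X\setminus\dom(p_0)$ — both requirements fall out immediately. The inclusion is clear because $p_0=\bar p\restriction\dom(p_0)\subseteq\bar p\restriction D=q$. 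For the orbit bound, any $\dom(p_0)$-orbit $a\subseteq D$ is automatically a $\dom(p_0)$-orbit in $X\setminus\dom(p_0)$ (orbits are disjoint from $\dom(p_0)$ by definition of the action), so $q''a=\bar p''a$ is bounded by the clause provided by $p_0\leq\bar p$.

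The mutual-genericity hypothesis $V[G_0]\cap V[G_1]=V$ enters through coherence of the union $p_0\cup p_1$: because the total coloring $\bar p$ is a fixed object of the ground model referenced from both extensions, the inclusions $p_0\subseteq\bar p$ and $p_1\subseteq\bar p$ force $p_0$ and $p_1$ to agree on $\dom(p_0)\cap\dom(p_1)$, so gluing them inside $\bar p\restriction D$ is unproblematic. The main hurdle is not combinatorial but definitional: one must read $p_i\leq\bar p$ as the natural ``virtual'' extension of the poset order, with the orbit-boundedness clause built in; without that clause, the orbit condition for $q\leq p_0$ could not be extracted from $\bar p$ alone. Once this reading is in place, the proposition follows from the direct restriction construction above.
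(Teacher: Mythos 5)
Your proposal rests on a misreading of the ordering, and the construction collapses because of it. In this poset the stronger condition is the \emph{larger} function: $q\leq p$ is defined by $p\subseteq q$ plus the orbit clause. Hence the hypothesis $p_0\leq\bar p$ means $\bar p\subseteq p_0$ (interpreted in an extension where $X\cap V$ is countable), not $p_0\subseteq\bar p$ as you assume. The conditions $p_0$ and $p_1$ properly \emph{extend} $\bar p$ to points of $X\setminus V$, where $\bar p$ --- a ground-model object, total only on $X\cap V$ --- is simply undefined. Consequently $q:=\bar p\restriction D$ is not even a function on $D$: it assigns no value to any point of $\dom(p_0)\setminus V$ or $\dom(p_1)\setminus V$, and the claimed identity $p_0=\bar p\restriction\dom(p_0)$ is false. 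A telltale sign is that in your argument the hypothesis $V[G_0]\cap V[G_1]=V$ does essentially no work: if $p_0$ and $p_1$ really were restrictions of one fixed total coloring, they would agree automatically and the proposition would be trivial.

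The actual content of the proposition, which your restriction construction bypasses entirely, is threefold. First, one uses $V[G_0]\cap V[G_1]=V$ to see that $\dom(p_0)\cap\dom(p_1)\subseteq V$, where both conditions agree with $\bar p$, so $p_0\cup p_1$ is a function. Second, one must extend $p_0\cup p_1$ to a countable $\Gamma$-closed domain, and the danger is a hyperedge $\{x_0,x_1,x\}$ with $x_0\in\dom(p_0)\setminus V$, $x_1\in\dom(p_1)\setminus V$, and $x$ a new point: the value $q(x)$ must avoid $p_0(x_0)=p_1(x_1)$ whenever these coincide. The paper handles this by proving, again from mutual genericity and the closure of $V$ under the inverses of the $g_j$ and $g_j+g_k$, that the set of such $x_0$ (resp.\ $x_1$) completing edges with a fixed $x$ lies in finitely many $X\cap V$-orbits, so one can choose $q(x)$ in a level $C_{n_x}$ not used by those points. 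Third, the choice of the levels $n_x$ must be uniformly bounded on each $\dom(p_0)$-orbit and each $\dom(p_1)$-orbit, or else $q\leq p_0,p_1$ fails; this is the role of the grading $C=\bigcup_nC_n$ and of the orbit clause in the hypothesis $p_i\leq\bar p$. None of these steps can be extracted from $\bar p$ alone, so the proposal has a genuine gap rather than an alternative route.
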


\noindent Note that the analytic poset $P$ is reinterpreted in every transitive model of set theory; in particular, the conditions $p_0, p_1$ are compatible in the poset $P$ as reinterpreted in $V[G_0, G_1]$ or any larger model.

\begin{proof}
Work in the model $V[G_0, G_1]$. 

\begin{claim}
$p_0\cup p_1$ is a function.
\end{claim}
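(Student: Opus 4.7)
The plan is to use the fact that both $p_0$ and $p_1$ refine a common total $\Gamma$-coloring $\bar p$. In the virtual-condition reading of the balanced framework, the assertion $p_i\le \bar p$ should say that $p_i$ agrees with $\bar p$ on $\dom(p_i)$, i.e., $p_i\subseteq \bar p$ as partial functions. If that reading is correct then both $p_0,p_1\subseteq \bar p$, so $p_0\cup p_1\subseteq \bar p$, which is a function because $\bar p$ is.

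The only interpretive hurdle is that the literal ordering from Definition~\ref{posetdefinition} would demand $\bar p\subseteq p_i$, which is impossible because $\bar p$ is defined on all of $X$ while conditions in $P$ have countable domain. So the content of this first claim is essentially to pin down the convention: the virtual (total) condition $\bar p$ fixes the values of any regular $p_i$ with $p_i\le \bar p$, and once this is in place the single-valuedness of $p_0\cup p_1$ is a tautology, since for $x\in\dom(p_0)\cap\dom(p_1)$ we simply compute $p_0(x)=\bar p(x)=p_1(x)$.

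I would also flag, for orientation, that the mutual-genericity hypothesis $V[G_0]\cap V[G_1]=V$ plays no role in this particular claim. Its use is being deferred to the subsequent claims needed to prove Proposition~\ref{balanceproposition}, where one still has to check that $\dom(p_0\cup p_1)$ is $\Gamma$-closed and countable, that the resulting function is a $\Gamma$-coloring (here remainder-graph considerations and the slim linear structure from Example~\ref{criticalexample} will enter), and that the orbit-refinement clause of the ordering is satisfied with respect to both $p_0$ and $p_1$; those are the natural places where an object built from both sides must be forced to lie in $V$. The present claim, by contrast, I expect to dispatch in one line.
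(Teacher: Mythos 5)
Your reading of the ordering is backwards, and this reversal hides the one real point of the claim. In Definition~\ref{posetdefinition}, $p_i\le\bar p$ does literally mean $\bar p\subseteq p_i$, and this is not ``impossible'': $\bar p$ is a total coloring \emph{in $V$}, so as a set of ordered pairs its domain is $X\cap V$, which from the vantage point of $V[G_i]$ (or of the extension where the comparison is made) is a proper, countable, $\Gamma$-closed subset of the reinterpreted space $X$. The conditions $p_i$ are genuine extensions of $\bar p$ to points of $X\setminus V$; in particular $p_0\cup p_1\subseteq\bar p$ is false, so your one-line argument ``$p_0\cup p_1\subseteq\bar p$, hence a function'' does not go through. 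The computation $p_0(x)=\bar p(x)=p_1(x)$ is only available for $x\in\dom(\bar p)=X\cap V$.

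This is exactly where the mutual-genericity hypothesis enters, contrary to your assertion that it ``plays no role in this particular claim.'' The paper's argument is: if $x\in\dom(p_0)\cap\dom(p_1)$, then $x\in V[G_0]\cap V[G_1]=V$, hence $x\in\dom(\bar p)$, and both conditions agree with $\bar p$ there since each extends it. Without the hypothesis the claim is simply wrong: take $G_0=G_1$ and let $p_0,p_1$ be two extensions of $\bar p$ in $V[G_0]$ assigning different colors to the same new point. So the claim is not a tautology about conventions; it is the first (and in this proposition, essentially the only purely set-theoretic) use of $V[G_0]\cap V[G_1]=V$, and your proof is missing precisely that step.
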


\begin{proof}
This follows from the assumption that $V[G_0]\cap V[G_1]=V$: it then must be the case that $\dom(p_0)\cap\dom(p_1)\subset V$. The functions $p_0\restriction V$ and $p_1\restriction V$ are both the same, equal to $\bar p$. The claim follows.
\end{proof}

\noindent Of course, this does not end the proof; I have to find a countable $\Gamma$-closed set $\dom(q)\subset X$ which contains $\dom(p_0\cup p_1)$ and a condition $q\in P$ with this domain such that $q\leq p_0, p_1$ holds. To this end, for every $x\in X\setminus\dom(p_0\cup p_1)$ let $a_0(x)=\{x_0\in\dom(p_0)\setminus V\colon \exists x_1\in\dom(p_1)\setminus V\ \{x_0, x_1, x\}\in\Gamma\}$ and $a_1(x)=\{x_1\in\dom(p_1)\setminus V\colon \exists x_0\in\dom(p_0)\setminus V\ \{x_0, x_1, x\}\in\Gamma\}$.

\begin{claim}
Let $x\in X\setminus\dom(p_0\cup p_1)$ be a point. Each of the sets $a_0(x), a_1(x)$ is a subset of a finite union of $X\cap V$-orbits. In addition, 

\begin{enumerate}
\item if $b$ is a $\dom(p_0)$-orbit, then $\bigcup_{x\in b\setminus\dom(p_1)}a_1(x)$ is a subset of a finite union of $X\cap V$-orbits;
\item same with subscripts $0, 1$ interchanged.
\end{enumerate}
\end{claim}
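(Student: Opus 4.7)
The plan is to exploit the finite decomposition of $\Gamma$ into slim linear pieces together with the hypothesis $V[G_0]\cap V[G_1]=V$ via a subtraction trick that eliminates the fixed variable $x$. Writing $\Gamma=\bigcup_{i<n}\Gamma_i$ with generating Borel homomorphisms $g_0^i,g_1^i,g_2^i$, any incidence $\{x_0,x_1,x\}\in\Gamma_i$ is witnessed by some pair $(i,\pi)$, where $\pi$ is one of the six enumerations of $\{0,1,2\}$ placing $x_0,x_1,x$ into the slots so that $g^i_{\pi(0)}(x_0)+g^i_{\pi(1)}(x_1)+g^i_{\pi(2)}(x)=0$. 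There are only $6n$ such types, so to bound $a_0(x)$, $a_1(x)$, or the unions of these, by finitely many $X\cap V$-cosets it suffices to show that within a single fixed type, the relevant elements all lie in one coset of $X\cap V$.

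For the first assertion, fix $x$ and a type $(i,\pi)$, and take witnesses $(x_0,x_1)$ and $(x_0',x_1')$ of this type. Subtracting the two defining equations eliminates $x$, leaving
\[
g^i_{\pi(0)}(x_0-x_0')\;=\;-g^i_{\pi(1)}(x_1-x_1').
\]
The left-hand side lies in $V[G_0]$ (from $x_0,x_0'\in\dom(p_0)\subset V[G_0]$), the right-hand side in $V[G_1]$, so both must lie in $V[G_0]\cap V[G_1]=V$. Since $g^i_{\pi(0)}$ is an injective Borel homomorphism, its partial inverse is a ground-model Borel function mapping $V$ into $V$, and therefore $x_0-x_0'\in V$. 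This gives the coset collapse for $a_0(x)$, and the argument for $a_1(x)$ is identical with the subscripts $0,1$ swapped.

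For items (1) and (2) the same subtraction is applied, now with two witnesses $(x_0,x_1,x)$ and $(x_0',x_1',x')$ of a single type in which $x,x'$ lie in a common $\dom(p_0)$-orbit. Then $x-x'\in\dom(p_0)$, and $\Gamma$-closure of $\dom(p_0)$ under $g^i_{\pi(2)}$ places $g^i_{\pi(2)}(x-x')\in\dom(p_0)\subseteq V[G_0]$. The subtracted equation
\[
g^i_{\pi(0)}(x_0-x_0')+g^i_{\pi(1)}(x_1-x_1')+g^i_{\pi(2)}(x-x')=0
\]
then exhibits $g^i_{\pi(1)}(x_1-x_1')$ as a quantity simultaneously in $V[G_0]$ (by solving the equation) and in $V[G_1]$ (since $x_1,x_1'\in V[G_1]$), hence in $V$; injectivity of $g^i_{\pi(1)}$ forces $x_1-x_1'\in V$. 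The symmetric statement obtained by interchanging $0$ and $1$ is proved identically.

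The main conceptual step is noticing that subtraction kills the uncontrolled parameter (either $x$ itself, or the orbit offset $x-x'$), collapsing everything onto the injection $g^i_{\pi(j)}$ to which the intersection hypothesis $V[G_0]\cap V[G_1]=V$ applies cleanly. The only bookkeeping pitfall is ensuring that when we subtract we are comparing witnesses of the \emph{same} type $(i,\pi)$; this is handled by a trivial union bound over the $6n$ types, which is responsible for the "finite union" in the conclusion.
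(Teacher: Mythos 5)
Your proof is correct and follows essentially the same route as the paper's: fix one slim component and one assignment of the three points to the slots of the defining equation, subtract two witnessing equations to cancel the term involving $x$ (or to push it into $V[G_0]$ via the orbit offset $x-x'$), and use $V[G_0]\cap V[G_1]=V$ together with injectivity of the individual homomorphisms to collapse each type onto a single $X\cap V$-coset. The only difference is presentational: you make the union bound over the finitely many types explicit, which the paper leaves implicit.
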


\begin{proof}
Let $g_0, g_1, g_2$ be linear homomorphisms defining one of the slim linear components of the hypergraph $\Gamma$. For the first sentence of the claim, it will be enough to show that if $x_0, x'_0\in\dom(p_0)\setminus V$ and $x_1, x'_1\in \dom(p_1)\setminus V$ are points such that $g_0(x_0)+g_1(x_1)+g_2(x)=0$ and $g_0(x'_0)+g_1(x'_1)+g_2(x)=0$, then $x_0, x'_0$ are in the same $X\cap V$-orbit, and so are $x_1, x'_1$. To see this, subtract the latter equation from the former, and get $g_0(x_0-x'_0)=-g_1(x_1-x'_1)$. The left-hand side is in $V[G_0]$, the right-hand side is in $V[G_1]$, and since $V[G_0]\cap V[G_1]=V$, both are in $V$. Since $V$ is closed under inverses of $g_0$ and $g_1$, $x_0=x'_0\in X\cap V$ and $x_1-x'_1\in V$ as desired.

For (1) it will be enough to show that if $x_0, x'_0\in\dom(p_0)\setminus V$ and $x_1, x'_1\in \dom(p_1)\setminus V$ and $x, x'\in b$ are points such that $g_0(x_0)+g_1(x_1)+g_2(x)=0$ and $g_0(x'_0)+g_1(x'_1)+g_2(x')=0$, then $x_1, x'_1$ are in the same $X\cap V$-orbit. Again, subtract the latter equation from the former, getting $g_0(x_0-x'_0)-g_2(x_2-x'_2)=-g_1(x_1-x'_1)$. The left hand side is in $V[G_0]$ since $x_2-x'_2$ is; the right hand side is in $V[G_1]$. As before, this means that $-g_1(x_1-x'_1)\in V$ and $x_1-x'_1\in V$ as desired.
The proof of (2) is symmetric.
\end{proof}

\noindent Now, for each $x\in X\setminus\dom(p_0\cup p_1)$ let $n_x$ be the smallest number $n\in\gw$ such that there are no points $x_0\in\dom(p_0)\setminus V$ and $x_1\in\dom(p_1)\setminus V$ such that $p_0(x_0)\in C_n$, $p_1(x_1)\in C_n$, and $\{x_0, x_1, x\}\in\Gamma$. The claim (together with $p_0, p_1\leq\bar p$) shows that $n_x$ is well-defined and in addition, for every $\dom(p_0)$-orbit $b$, the numbers $n_x$ for $x\in b$ are bounded, and for every $\dom(p_1)$-orbit $c$, the numbers $n_x$ for $x\in c$ are bounded. Now, let $\dom(q)\subset X$ be any countable $\Gamma$-closed set containing $\dom(p\cup p_1)$, write $d=\dom(q)\setminus\dom(p_0\cup p_1)$, and let $q\colon\dom(q)\to C$ be any function such that $p_0\cup p_1\subset q$, for every $x\in d$ $q(x)\in C_{n_x}$, and $q\restriction d$ is an injection. Such a function clearly exists since the sets $C_n$ for every $n\in\gw$ are infinite. It will be enough to show that $q\in P$ and $q$ is a common lower bound of $p_0, p_1$.

First, I must argue that $q$ is a $\Gamma$-coloring. Let $e\subset\dom(q)$ be a $\Gamma$-hyperedge; I must argue that $e$ is not monochromatic. The discussion splits into several possible configurations.

\noindent\textbf{Case 1.} $e\subset\dom(p_0\cup p_1)$. In this case, a counting argument shows that there is $i\in 2$ such that $\dom(p_i)$ contains two points of $e$, and the $\Gamma$-closure of $\dom(p_i)$ shows that $\dom(p_i)$ contains all points of $e$. Then $e$ cannot be monochromatic since $p_i$ is a $\Gamma$-coloring.

\noindent\textbf{Case 2.} $e$ contains exactly one point, say $x$, in the set $d$. In this case, the closure properties of the sets $\dom(p_0)$ and $\dom(p_1)$ show that there must be a point $x_0\in\dom(p_0)\setminus V$ and a point $x_1\in\dom(p_1)\setminus V$ such that $e=\{x_0, x_1, x\}$. Then, $e$ is not monochromatic by the choice of the number $n_x$ and the fact that $q(x)\in C_{n_x}$.

\noindent\textbf{Case 3.} $e$ contains more than one point in the set $d$. In this case, $e$ is not monochromatic since $q\restriction d$ is an injection.

Finally, I must show that $q\leq p_0$; the proof of $q\leq p_1$ is symmetric. To this end, suppose that $b\subset\dom(q)\setminus\dom(p_0)$ is a $\dom(p_0)$-orbit; I must show that $q''b$ is a subset of the union of finitely many sets $C_n$ for $n\in\gw$. Write $b_0=b\cap\dom(p_1)$ and $b_1=b\setminus\dom(p_1)$. 

The first observation is that $b_0$ is a $\dom(\bar p)$-orbit: if $x, x'\in b_0$ then $x-x'\in V[G_1]$ as $x, x'\in V[G_1]$, and $x-x'\in V[G_0]$ as $b_0$ is a subset of $\dom(p_0)$-orbit. Since $V[G_0]\cap V[G_1]=V$, it follows that $x-x'\in V$; as $x, x'\in b_0$ were arbitrary, it follows that $b_0$ is a $\dom(\bar p)$-orbit. Since $p_1\leq\bar p$ is assumed, it follows that $q''b_0$ is a subset of the union of finitely many sets $C_n$ for $n\in\gw$.

The second observation is that the set $c=\bigcup_{x\in b_1}a_1(x)\subset\dom(p_1)$ is a subset of a finite union of $\bar p$-orbits, and since $p_1\leq\bar p$ is assumed, the set $q''c$ is covered by a finite union of the sets $C_n$ for $n\in\gw$. Let $m\in\gw$ be some number such that $C_m\cap q''c=0$. It follows from the definitions that $n_x\leq m$ holds for every $x\in b_1$, therefore $q''b_1\subset\bigcup_{n\leq m}C_n$ holds.

The two observations taken together show that $q''b$ is a subset of the union of finitely many sets $C_n$ for $n\in\gw$ as desired.
\end{proof}

\begin{corollary}
\label{balancecorollary}
The poset $P$ is balanced, and in fact placid in the sense of \cite[Definition 9.3.1]{z:geometric}.
\end{corollary}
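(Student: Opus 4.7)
The plan is to derive the corollary from Proposition~\ref{balanceproposition} by identifying the balanced virtual conditions of $P$ with total $\Gamma$-colorings $\bar p\colon X\to C$. Two things then need to be checked: first, every $p\in P$ is below some such virtual condition; second, each total $\Gamma$-coloring is genuinely balanced (and in fact placid).

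For the first point, given $p\in P$, I would extend $p$ to a total $\Gamma$-coloring of $X$ by transfinite recursion, starting with $p$ itself as the initial stage of a coherent sequence in the sense of Definition~\ref{coherentdefinition}. Fix a well-ordering of $X$, set $A_0=\dom(p)$ and $c_0=p$, and at each successor stage enlarge $A_\xi$ to a countable $\Gamma$-closed superset containing the next element of $X$; use Example~\ref{criticalexample} at each stage to produce the required $\Delta_\xi$-coloring $d_\xi$ into $\hefi$. Proposition~\ref{amalgamationproposition} then certifies that the amalgamation $\bar p$ is a $\Gamma$-coloring of $X$ extending $p$, and a harmless rearrangement of colors ensures that its range sits inside the specified decomposition $C=\bigcup_nC_n$.

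For the second point, Proposition~\ref{balanceproposition} is exactly the balance of $\bar p$ as a virtual condition: it says that any two conditions below $\bar p$ drawn from mutually generic extensions with trivial intersection are compatible in $P$. For placidity in the sense of \cite[Definition 9.3.1]{z:geometric}, I would revisit the proof of Proposition~\ref{balanceproposition} and observe that the intersection hypothesis $V[G_0]\cap V[G_1]=V$ enters only in two spots: to conclude that $p_0\cup p_1$ is a function (via the common restriction to $\bar p$), and to force the linearity-derived differences $g_0(x_0-x_0')=-g_1(x_1-x_1')$ into $V$, thereby controlling the orbit structure of the sets $a_0(x), a_1(x)$. Both usages continue to function in the placid amalgamation framework, where the paired models still share $V$ in the intersection. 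The main obstacle I anticipate is not conceptual but bookkeeping: carefully matching the formal data of \cite[Definition 9.3.1]{z:geometric} to the total-coloring representation of virtual conditions used above. No new combinatorial input beyond the propositions of this section is needed.
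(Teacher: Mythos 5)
Your overall strategy is the paper's own: realize the balanced (placid) virtual condition over $p$ as a total $\Gamma$-coloring $\bar p$ extending $p$, obtained by amalgamation, and then read Proposition~\ref{balanceproposition} as the placidity statement for $\bar p$. The paper takes a shortcut on the construction of $\bar p$: instead of a fresh transfinite recursion, it amalgamates $p$ with a single global $\Gamma$-coloring $c\colon X\to\hefi$ (Corollary~\ref{chromaticcorollary2}) and a single remainder-graph coloring $d$ of $\Gamma_{\dom(p)}$ (Corollary~\ref{chromaticcorollary1}), i.e.\ a two-term coherent sequence. Your longer recursion also works, since Proposition~\ref{amalgamationproposition} applies to it just as well; the difference is purely economical. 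Your reading of where $V[G_0]\cap V[G_1]=V$ enters the proof of Proposition~\ref{balanceproposition} is accurate and matches the paper's use of that proposition for placidity.

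There is, however, one genuine gap: you verify only that $\bar p$ is a total $\Gamma$-coloring extending $p$ as a function, and you treat the passage into the codomain $C=\bigcup_nC_n$ as a ``harmless rearrangement of colors.'' But $\leq$ in Definition~\ref{posetdefinition} is \emph{not} reverse inclusion: $\bar p\leq p$ additionally requires that for every $\dom(p)$-orbit $a$, the set $\bar p''a$ be contained in finitely many of the pieces $C_n$. A careless recoding could scatter the (countably many) values on a single orbit across infinitely many $C_n$'s, and then $\bar p\leq p$ fails even though $p\subseteq\bar p$. This is exactly what the paper's construction is engineered to avoid: with the convention $C_n=\{c\in\hefi\setminus\{0\}\colon|c|=n+1\}$, the amalgamation assigns every point outside $\dom(p)$ an ordered pair, i.e.\ a set of cardinality at most two, so the range of $\bar p\setminus p$ lands in $C_0\cup C_1$ and the orbit condition is satisfied trivially. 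Your transfinite amalgamation in fact has the same feature (all new values are ordered pairs), so the fix is to say this explicitly rather than to appeal to an unspecified rearrangement. One should also note, as the paper does, that $\bar p$ only becomes an actual condition of $P$ after collapsing $|X|$ to $\aleph_0$, which is the sense in which it is a virtual condition.
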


\begin{proof}
It will be enough, for every condition $p\in P$, to find a total coloring $\bar p\colon X\to\hefi$ such that the range of $\bar p\setminus p$ consists of sets of cardinality at most two. For then, in any generic extension collapsing the cardinality of the ground model continuum to $\aleph_0$, it will be the case that $\bar p\in P$ and $\bar p\leq p$ holds. In addition, Proposition~\ref{balanceproposition} then shows that $\bar p$ will be exactly the balanced (or placid) virtual condition required in the definition of balance (or placidity).

Now, to obtain the extension of $p$ to $\bar p$, use Corollary~\ref{chromaticcorollary2} to find a $\Gamma$-coloring $c\colon X\to\hefi$ and use Corollary~\ref{chromaticcorollary1} to find a $\Delta(\dom(p))$-coloring $d\colon\mathbb{R}^2\setminus\dom(p)\to\hefi$. Define the function $\bar p$ by setting $\bar p(x)=p(x)$ if $x\in\dom(p)$, and $\bar p(x)=\langle c(x), d(x)\rangle$ if $x\notin\dom(p)$. This is literally an amalgamation of $p$ with $c$ and $d$ in the sense of Definition~\ref{amalgamationdefinition}, so Proposition~\ref{amalgamationproposition} shows that $\bar p$ is a $\Gamma$-coloring. By its definition, the range of $\bar p\setminus p$ consists of sets of cardinality at most two. This completes the proof.
\end{proof}

\section{The independence proofs}
\label{finalsection}

Finally, I am in the position to provide the proof of Theorem~\ref{maintheorem}. Let $\kappa$ be an inaccessible cardinal and let $W$ be the choiceless Solovay model derived from $\kappa$.  Let $G\subset P$ be a filter generic over $W$; I will show that $W[G]$ is a model witnessing Theorem~\ref{maintheorem}. There are several components of that proof.

\begin{proposition}
\label{vitaliproposition}
In the model $W[G]$, there is no Vitali set.
\end{proposition}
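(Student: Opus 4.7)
Argue by contradiction. Suppose $W[G]$ contains a Vitali set; by the ZF+DC equivalence stated in the introduction, some $p_{0}\in G$ forces a $P$-name $\dot c$ to be a countable coloring of the Vitali graph on $\mathbb{R}$ (edges: distinct $x,y$ with $x-y\in\mathbb{Q}$). By the balance and placidity of $P$ (Corollary~\ref{balancecorollary}), one may pass to a balanced virtual condition $\bar p\leq p_{0}$: a total $\Gamma$-coloring $\bar p\colon X\to\hefi$ sitting in the ZFC ground model $V$ from which the choiceless Solovay model $W$ was derived, and which decides $\dot c$ modulo Solovay-genericity in the sense that extensions of $\bar p$ in mutually generic extensions of $V$ remain compatible in $P$ by Proposition~\ref{balanceproposition}.

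The target is to produce two conditions $q_{0},q_{1}\leq\bar p$ in $P$ that live in mutually generic extensions of $V$, together with distinct reals $r_{0},r_{1}$ and a common color $n\in\omega$ such that $q_{0}\Vdash\dot c(r_{0})=n$, $q_{1}\Vdash\dot c(r_{1})=n$, and $r_{0}-r_{1}\in\mathbb{Q}\setminus\{0\}$. Once such data are in hand, Proposition~\ref{balanceproposition} yields a common lower bound $q\in P$ of $q_{0},q_{1}$, and then $q$ forces the Vitali-adjacent distinct reals $r_{0},r_{1}$ to receive the same $\dot c$-color, contradicting that $\dot c$ is a Vitali coloring.

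The construction of $q_{0},q_{1},r_{0},r_{1},n$ proceeds by analyzing $\dot c$ via the $\mathrm{Coll}(\omega,<\kappa)$-name apparatus standard in the Solovay model. In a generic extension of $V$ one obtains a large uniform family of finite approximations to $\dot c$ sitting below extensions of $\bar p$, each approximation being a condition in the finite-condition coloring poset $R$ of a suitable auxiliary hypergraph combining $\Gamma$ with the rational translation structure. The Ramsey-$\sigma$-centeredness provided by Theorem~\ref{1theorem} applied to $\Gamma$ (whose triples are determined by two of their vertices), together with Theorem~\ref{2theorem} applied to the associated locally finite remainder graph, allows pigeonholing in this family: one extracts two approximations, forcing-compatible with $\bar p$ in mutually generic side extensions, whose colors agree on a pair of reals differing by a nonzero rational.

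The main obstacle is that the Vitali graph itself is not locally finite or bounded-arity, so neither Theorem~\ref{1theorem} nor Theorem~\ref{2theorem} applies to it directly; the argument must route through an auxiliary graph, built from $\Gamma$ and $\mathbb{Q}$-translations, whose finite-condition coloring poset is Ramsey-$\sigma$-centered while still detecting the Vitali adjacency. Verifying that this auxiliary graph meets the degree hypotheses of the theorems of Section~\ref{cccsection}, and that the resulting Ramsey argument produces a Vitali-equivalent pair with matching forced color, is the ``fine evaluation of chain condition'' alluded to in the introduction.
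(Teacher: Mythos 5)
Your high-level intuition is right in one respect: the proof does go through the Ramsey-$\sigma$-centered finite-condition coloring posets of Section~\ref{cccsection}, with Theorem~\ref{1theorem} applying to $\Gamma$ (any two vertices of a slim linear hyperedge determine the third, since the defining homomorphisms are injective) and Theorem~\ref{2theorem} applying to the locally finite homomorphic images of the remainder graphs from Example~\ref{criticalexample}. But the central mechanism you propose is unworkable, and the step you flag as ``the main obstacle'' is not where the difficulty lies. You want to produce conditions $q_0\in V[G_0]$, $q_1\in V[G_1]$ in mutually generic extensions, below the balanced condition $\bar p$, forcing the same color on distinct reals $r_0, r_1$ with $r_0-r_1\in\Q$, and then amalgamate via Proposition~\ref{balanceproposition}. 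This can never be arranged with the Vitali edge genuinely split between the two sides: if $r_0-r_1\in\Q\subset V$ and $r_0\in V[G_0]$, $r_1\in V[G_1]$ with $V[G_0]\cap V[G_1]=V$, then $r_0=r_1+q$ forces $r_0\in V[G_1]$ and hence $r_0, r_1\in V$. And if both reals lie in $V$, the argument reduces to the false claim that $V$ (a model of ZFC) admits no countable Vitali coloring. This is exactly why balance alone does not exclude Vitali sets and why the paper must verify the stronger property of \emph{Ramsey control}: the relevant amalgamation happens among conditions of a single c.c.c.\ poset $R$ (via Ramsey-centeredness), where $\bar p$ is an $R$-\emph{name} for a balanced virtual condition, not among $P$-conditions in mutually generic models.

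Concretely, the paper's proof exhibits the Ramsey control triple: $R$ is a finite support iteration of length $\gw_1$ whose $\ga$-th iterand is $R_{\ga 0}\times R_{\ga 1}$, with $R_{\ga 0}$ the finite-condition coloring poset for $\Gamma$ (Theorem~\ref{1theorem}) and $R_{\ga 1}$ the finite-condition coloring poset for the locally finite graph $\Delta_\ga$ onto which the current remainder graph maps homomorphically (Example~\ref{criticalexample} plus Theorem~\ref{2theorem}); the name $\bar p$ is the amalgamation of the resulting coherent sequence $\langle p, \dot c_\ga, \dot d_\ga\colon\ga\in\gw_1\rangle$, a total $\Gamma$-coloring below $p$ by Proposition~\ref{amalgamationproposition}. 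No auxiliary graph ``built from $\Gamma$ and $\Q$-translations'' appears anywhere; the Vitali equivalence relation enters only at the very end, through the citation of \cite[Theorem 11.6.5]{z:geometric} applied to it as a Borel graph. Your proposal leaves the auxiliary poset unspecified and replaces the actual extraction argument (which lives inside that cited theorem) with a pigeonholing sketch that, as written, would have to overcome the mutual-genericity obstruction above; so as it stands there is a genuine gap.
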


\begin{proof}
It will be enough to show that the poset $P$ has Ramsey control as in \cite[Section 11.6]{z:geometric}. This means that in ZFC, for a condition $p\in P$ there is a triple $\langle R, \leq, \mathcal{C}, \bar p\rangle$ such that

\begin{enumerate}
\item $R, \leq, \bar p$ are all definable from ordinal parameters and $p$;
\item $\mathcal{C}$ is a collection consisting of Ramsey centered subsets of $R$, each of which is definable from ordinal parameters and $p$, and $R=\bigcup\mathcal{C}$;
\\tem $\bar p$ is an $R$-name for a total $\Gamma$-coloring such that $R\Vdash\coll(\mathbb{R}, \aleph_0)\Vdash\bar p\leq \check p$.
\end{enumerate}

The poset $R$ is the finite support iteration of length $\gw_1$ in which the $\ga$-th iterand (in the model $\mathbb{R}^2\cap V[G_\gb\colon\gb\in\ga]$) is the poset $R_{\ga0}\times R_{\ga1}$ where $R_{\ga0}$ is the finite condition poset coloring the hypergraph $\Gamma$ from Theorem~\ref{1theorem}. To specify $R_{\ga1}$, use Example~\ref{criticalexample} to find a homomorphism $h_\ga\colon X_\ga\to Y_\ga$, where $X_{\ga}=X\setminus\bigcup_{\gb\in\ga}\{V[G_\gg\colon \gg\in\gb\}$ (if $\ga=0$) or $X\setminus\dom(p)$ if $\ga=0$ and $h_\ga$ is a homomorphism of the remainder graph $\Gamma_{X\setminus X_\ga}$ to some locally finite graph $\Delta_\ga$. Note that Example~\ref{criticalexample} provides $h_\ga, Y_\ga, \Delta_\ga$ which are definable from $p$ (if $\gamma=0$) or from the sequence of intermediate generic extensions (if $\gamma>0$). $R_{\ga1}$ will then be the finite condition poset adding a coloring of the graph $\Delta_\ga$.

 Each of these posets is suitably definably $\gs$-Ramsey-centered, and c.c.c. This means that the finite support iteration $R$ is c.c.c. and it is a union of Ramsey-centered pieces, each of which is definable from $p$ and some ordinal parameters. This is proved as in \cite[Proposition 11.6.3]{z:geometric}, noting that the Suslinity of the posets can be replaced with ordinal definability from the sequence of generic extensions.

At each stage of the iteration, $R_{\ga+1}$ adds a $\Gamma$-coloring $\dot c_\ga$ and $R_{\ga1}$ adds a coloring $\dot d_\ga$ of the remainder graph which is the pullback of the generic $\Delta_\ga$-coloring by the homomorphism $h_\ga$. In the end, let $\bar p$ be the name for the amalgamation of the coherent sequence $\langle p, \dot c_\ga, \dot d_\ga\colon\ga\in\gw_1\rangle$ of colorings as in Definition~\ref{amalgamationdefinition}. This will be a total $\Gamma$-coloring extending $p$ by Proposition~\ref{amalgamationproposition}; $R\Vdash\coll(\mathbb{R}, \aleph_0)\Vdash\bar p\leq \check p$ follows from the definitions.

The proof of the proposition is now completed by a reference to \cite[Theorem 11.6.5]{z:geometric} applied to the Vitali equivalence relation viewed as a Borel graph.
\end{proof}

\begin{proposition}
\label{e1proposition}
In the model $W[G]$, the equivalence relation $E_1$ does not have a complete countable section.
\end{proposition}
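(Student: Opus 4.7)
The proof will follow the template of Proposition~\ref{vitaliproposition} essentially verbatim: one verifies that $P$ has Ramsey control, and then invokes the analogue of \cite[Theorem 11.6.5]{z:geometric} in its $E_1$ form rather than its Vitali form. The Ramsey-control data $\langle R,\leq,\mathcal{C},\bar p\rangle$ attached to a condition $p\in P$ has already been exhibited in the previous proposition, and it depends only on $p$ and $\Gamma$, not on the specific application. Recall that $R$ is the finite support iteration of length $\gw_1$ whose $\ga$-th iterand is the product $R_{\ga 0}\times R_{\ga 1}$ of the finite-condition coloring poset for $\Gamma$ (Theorem~\ref{1theorem}) and the finite-condition coloring poset for the locally finite remainder graph supplied by Example~\ref{criticalexample} (Theorem~\ref{2theorem}), and that $\bar p$ is the amalgamation of all the generic colorings via Proposition~\ref{amalgamationproposition}, forced to be a total $\Gamma$-coloring below $\check p$.

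Granted Ramsey control, the conclusion will follow from the general theorem of \cite[Section 11.6]{z:geometric} in its $E_1$ form: whenever a $\gs$-closed analytic poset admits Ramsey control as above, the resulting extension of the choiceless Solovay model $W$ contains no complete countable section for $E_1$. Morally, a putative countable section $S\in W[G]$ would, by the homogeneity of the Solovay model, be definable from an intermediate generic filter together with ordinal parameters; a mutual genericity argument using two compatible copies of a Solovay-generic sequence of reals then produces a pair of $E_1$-related generic tails whose prospective $S$-representatives the Ramsey-centered decomposition of $R$ forbids to coexist in a single filter, yielding the contradiction.

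The principal obstacle I anticipate is technical, and is the same one already surmounted in Proposition~\ref{vitaliproposition}: the iterands $R_{\ga 0}\times R_{\ga 1}$ are not Suslin in the ground model but only in the relevant intermediate generic extension, so the cited theorem must accept ordinal definability from the sequence of intermediate generic extensions in place of genuine Suslinity. This is indeed the form in which the Ramsey-control theorems of \cite[Section 11.6]{z:geometric} are stated, and the verification carried out for the Vitali case transfers without change. No new combinatorial input beyond the tools developed in Sections~\ref{remaindersection} and~\ref{cccsection} is required; the entire content of the proposition lies in matching the already-constructed Ramsey-control apparatus against the $E_1$ version of the black-box theorem.
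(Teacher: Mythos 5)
Your proposal goes down the wrong road. The paper does not handle $\mathbb{E}_1$ via Ramsey control at all; there is no ``$E_1$ form'' of \cite[Theorem 11.6.5]{z:geometric} to invoke. That theorem is a statement about chromatic numbers of Borel graphs (which is why Proposition~\ref{vitaliproposition} can apply it to the Vitali equivalence relation \emph{viewed as a graph}, non-existence of a Vitali set being equivalent to uncountable chromatic number of that graph). A complete countable section for $\mathbb{E}_1$ is not a chromatic-number statement, and your ``morally'' paragraph --- a mutual genericity argument with \emph{two} compatible generic copies --- cannot work for $\mathbb{E}_1$: the whole point of $\mathbb{E}_1$ (it is hypersmooth, an increasing union of countably many smooth relations, and not reducible to any orbit equivalence relation) is that ruling out a countable complete section requires an \emph{infinite descending} tower of models, not a single pair.

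What the paper actually does is prove that $P$ is \emph{nested balanced} \cite[Definition 9.4.2]{z:geometric} and then quote \cite[Theorem 9.4.4]{z:geometric}: in a nested balanced extension the $\mathbb{E}_1$-quotient cardinal is not below the orbit space of any Polish group action, whereas a complete countable section would inject the $\mathbb{E}_1$-quotient into the $\mathbb{F}_2$-quotient. Verifying nested balance requires, for every choice-coherent decreasing sequence $\langle M_i\colon i\in\gw\rangle$ of models, a total coloring $\bar p$ whose restrictions $\bar p\restriction M_i$ lie in $M_i$ and form a descending sequence of balanced conditions. This is produced by amalgamating a coherent sequence of colorings indexed by $I=\gw+2$ with the \emph{reverse} ordinal ordering --- precisely the ill-founded linear order that Definition~\ref{coherentdefinition}(3) was set up to accommodate, with the colorings $c_i,d_i$ chosen as $\prec$-least for a coherent well-ordering so that each restriction lands in the right model. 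None of this appears in your proposal, and no amount of massaging the Ramsey-control apparatus of Section~\ref{cccsection} will substitute for it.
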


\begin{proof}
I will in fact show that the poset $P$ is nested balanced \cite[Definition 9.4.2]{z:geometric}. Then, by \cite[Theorem 9.4.4]{z:geometric}, in the $P$-extension of the Solovay model, the cardinality of the $\mathbb{E}_1$-quotient space is not smaller than the cardinality of the orbit space of any Polish group action. In particular, there is no complete $\mathbb{E}_1$ section $S\subset (2^\gw)^\gw$, since then the map sending any $\mathbb{E}_1$-class to its intersection with $S$ would be an injection of the $\mathbb{E}_1$-quotient space to the $\mathbb{F}_2$-quotient space, which is an orbits space of an action of the full permutation group $S_\infty$.

To recall what nested balance means, a sequence $\langle M_i\colon i\in\gw\rangle$ of transitive models of ZFC is \emph{coherent} \cite[Section 4.2]{z:geometric} if $M_0\supseteq M_1\supseteq\dots$ and for every ordinal $\ga$ and every number $i\in\gw$, the sequence $\langle M_j\cap V_\ga\colon j>i\rangle$ belongs to the model $M_i$. The sequence is \emph{choice-coherent} \cite[Section 4.3]{z:geometric} if for every ordinal $\ga$ there is a well-ordering $\prec$ of $V_\ga$ such that for every $i\in\gw$, $\prec\restriction M_i\in M_i$ holds. For a choice coherent sequence $\langle M_i\colon i\in\gw\rangle$, write $M_\gw$ for the intersection model $\bigcap_iM_i$. It is a transitive model of ZFC. 

To show that the poset $P$ is nested balanced, it is enough, for a given choice-coherent sequence $\langle M_i\colon i\in\gw\rangle$ of transitive models of ZFC, to build a descending sequence of suitably balanced conditions. In the context of the poset $P$, it means building a total coloring $\bar p\in M_0$ such that for each $i\in\gw+1$, the function $\bar p_i=\bar p_0\restriction M_i$ belongs to the model $M_i$, and for $i\leq j\leq\gw$, $\bar p_i\leq \bar p_j\leq p$ holds in any forcing extension which collapses $|M_0\cap\mathbb{R}|$ to $\aleph_0$.

To this end, let $\prec$ be a well-ordering of the set of all partial maps from $\mathbb{R}^2$ to $\hefi$ such that for each $i\leq\gw$, $\prec\restriction M_i\in M_i$ holds. 
Let $I=\gw+2$, and let $\leq$ be the reverse of the usual ordinal ordering on $I$. For each $i\in I$, define the real closed field $F_i$: if $i\leq\gw$ then $F_i=\mathbb{R}\cap M_i$, and $F_{\gw+1}=\supp(p)$. For each $i\in I$, define the colorings $c_i$ and $d_i$: if $i\in\gw+1$, then $c_i\colon F_i^2\to\hefi$ is the $\prec$-least $\Gamma$-coloring in the model $M_i$, and $d_i\colon F_i^2\setminus F_{i+1}^2\to \hefi$ is the $\prec$-least $\Delta_i$-coloring in the model $M_i$, where $\Delta_i$ is the remainder graph $\Delta(F_i^2, F_{i+1}^2)$. Also, $c_{\gw+1}=p$ and $d_{\gw+1}=0$. It is not difficult to see that $\langle I, \leq, F_i, c_i, d_i\colon i\in I\rangle$ is a coherent sequence of colorings as in Definition~\ref{coherentdefinition}. Let $\bar p$ be the amalgamation of these colorings as in Definition~\ref{amalgamationdefinition}. I claim that $\bar p$ is as required.

First of all, $\bar p$ is a $\Gamma$-coloring by Proposition~\ref{amalgamationproposition}. The definition of amalgamation, and the coherent choices of the colorings $c_i$ and $d_i$, immediately imply that for each $i\in\gw+1$, $\bar p_i=\bar p\restriction M_i$ belongs to the model $M_i$. Finally, $\bar p$ extends $p$, and by the definition of amalgamation, the range of $\bar p\setminus p$ consists of ordered pairs, i.e.\ of sets of cardinality at most two. It follows that for each $i\in j\in\gw+1$, $\bar p_i\leq \bar p_j\leq p$ holds in any forcing extension in which $|F_i|=\aleph_0$. The proof is complete.
\end{proof}

The proof of Theorem~\ref{maintheorem2} is quite different. It closely follows \cite[Section 11.8]{z:geometric}. It uses an auxiliary hypergraph and a simple proper forcing notion of independent interest.

\begin{definition}
Let $n\geq 2$ be a number. $\Delta_n$ is the $F_\gs$ $n$-ary hypergraph on the space $Y_n=n^\gw$, where a $\Delta_n$-hyperedge is a set $e\in [Y]^n$ such that there is a finite set $a\subset\gw$ and $z\in n^{\gw\setminus a}$ such that $e=\{y\in Y_n\colon  y\restriction \gw\setminus a=z$ and $y\restriction a$ is constant$\}$. 
\end{definition}

\noindent The hypergraph $\Delta_n$ is unfortunately not actionable in the sense of \cite[Definition 11.8.1]{z:geometric}, so the results of \cite[Section 11.8]{z:geometric} are not applicable verbatim and a restatement of the proof is necessary. There is a forcing notion associated with $\Delta_n$. Using the Hales--Jewett theorem repeatedly, produce a sequence of consecutive nonempty finite intervals $I_m$ of natural numbers for $m\in\gw$ such that for every partition of $n^{I_m}$ into $m$ pieces, one of the pieces contains a combinatorial line. Let $\phi_m$ be the submeasure on $n^{I_m}$ defined by $\phi_m(m)=$the smallest number of subsets of $n^{I_m}$, neither of which contains a combinatorial line, and together they cover $b$. Thus, $\phi_m(n^{I_m})\geq m$ the poset $R_n$ consists of sequences $r=\langle b_m\colon m\in\gw\rangle$ such that $b_m\subset n^{I^m}$ is a nonempty set and $\limsup_m\phi_m(b_m)=\infty$. The ordering on $R_n$ is that of coordinatewise inclusion. The generic point of $n^\gw$ added by the poset $R_m$ is the concatenation of the coordinatewise intersection of all conditions in the generic filter. The relevant properties of $R_n$ are verified as in \cite[Claim 11.8.4]{z:geometric}:

\begin{proposition}
The poset $R_n$ is proper, bounding, and adds no independent reals. In addition, for every condition $r\in R_n$, in the $R_n$-extension below $r$ there is a $\Delta_n$-hyperedge whose elements are all $R_n$-generic and meet the condition $r$.
\end{proposition}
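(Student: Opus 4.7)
The plan is to follow the verification of \cite[Claim 11.8.4]{z:geometric}, using the submeasures $\phi_m$ as a fusion scale and the Hales--Jewett theorem as the combinatorial engine. The fusion framework is standard: say a condition $r=\langle b_m:m\in\gw\rangle$ is \emph{$k$-large} if $\phi_m(b_m)\geq k$ for all $m$ above some threshold, and write $r'\leq_k r$ to mean $r'\leq r$, $b'_m=b_m$ for $m\leq k$, and $r'$ is $k$-large. Any fusion sequence $r_0\geq_0 r_1\geq_1\dots$ has a coordinatewise-intersection lower bound in $R_n$ whose $\phi_m$-values still tend to infinity, since each step only freezes finitely many new coordinates.

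For bounding, given a condition and a name $\dot f$ for a function in $\gw^\gw$, I would build a fusion sequence in which at step $k$ some coordinate $m_k$ beyond the already-frozen part is thinned so that only finitely many values of $\dot f(k)$ remain forced-possible; by subadditivity of $\phi_{m_k}$ one can then further thin to one of these finitely many subpieces while keeping its $\phi_{m_k}$-value a definite fraction of $\phi_{m_k}(b_{m_k})$, preserving $k$-largeness above $m_k$. The fusion limit dominates $\dot f$ by an explicit ground-model function. Running the same construction inside a countable elementary submodel $M\prec H_\gq$ yields an $(M,R_n)$-generic lower bound, hence properness.

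For the absence of independent reals, let $r$ force $\dot x\in 2^\gw$. The Hales--Jewett theorem, in the form used to define $\phi_m$, implies that for any partition of $b\subseteq n^{I_m}$ into fewer than $\phi_m(b)$ pieces, one piece still contains a combinatorial line. I would build a fusion sequence in which at stage $k$ a single coordinate $m_k$ with $\phi_{m_k}(b_{m_k})$ already large (at least $4$, say) is chosen, $b_{m_k}$ is split into two pieces according to the value forced for $\dot x(k)$, the piece still containing a combinatorial line $L_k$ is kept, and $b_{m_k}$ is replaced by $L_k$. Coordinates past $m_k$ remain $k$-large by earlier fusion choices, so the construction continues. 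Passing to an infinite subsequence on which the decided bit of $\dot x(k)$ is constant yields a ground-model infinite subset of $\gw$ on which $\dot x$ is eventually constant, witnessing non-independence.

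For the final clause, since $\limsup_m \phi_m(b_m)=\infty$, pick an index $m_0$ with $\phi_{m_0}(b_{m_0})\geq 2$; by the very definition of $\phi_m$, $b_{m_0}$ is not line-free and contains a combinatorial line $L\subset n^{I_{m_0}}$ with variable coordinate set $a\subset I_{m_0}$. Refine $r$ to $r'$ by replacing $b_{m_0}$ with $L$; other coordinates are unchanged, so $r'\in R_n$ and $r'\leq r$. For each $i<n$ the cyclic shift $\tau_i:L\to L$ sending the constant-$j$ point to the constant-$(j+i \bmod n)$ point extends to a ground-model order-automorphism $\gs_i$ of $R_n\restriction r'$ acting on coordinate $m_0$ alone. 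If $y$ is the generic point of an $R_n$-generic filter $G$ containing $r'$, the pushforward filter $\gs_i[G]$ is again $R_n$-generic, and its generic point $y_i$ equals $y$ off $a$ and is constantly $j+i \bmod n$ on $a$ (where $y\restriction a$ is constantly $j$); hence $\{y_i:i<n\}$ is a $\Delta_n$-hyperedge meeting $r$. The principal obstacle will be the independent-reals clause, where one must intertwine the Hales--Jewett pigeonhole with the submeasure bookkeeping along the fusion; the other three clauses are essentially routine once the fusion framework is in place.
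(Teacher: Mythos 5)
The paper offers no in-text argument here (it simply defers to the cited Claim 11.8.4 of the geometric set theory book), so your sketch has to stand on its own, and it has two genuine problems. First, your largeness notion is unattainable: a condition of $R_n$ only satisfies $\limsup_m\phi_m(b_m)=\infty$, so it may have $\phi_m(b_m)=1$ on infinitely many coordinates, and then no extension at all is $2$-large in your sense (``$\phi_m\geq k$ for all $m$ above a threshold''), since shrinking never increases $\phi_m$. The fusion must instead track ``infinitely many $m$ with $\phi_m(b'_m)\geq k$'', or one must first shrink the small-norm coordinates to singletons and work along an infinite set of coordinates on which the norms tend to infinity. Second, and more seriously, both the bounding and the independent-reals arguments treat $\dot f(k)$ and $\dot x(k)$ as if they were determined by the single block $y\restriction I_{m_k}$ of the generic. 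They are not: these values depend on the whole generic filter, so ``thinning $b_{m_k}$ so that only finitely many values of $\dot f(k)$ remain forced-possible'' and ``splitting $b_{m_k}$ according to the value forced for $\dot x(k)$'' are not defined operations. The missing content is a pure-decision (continuous reading of names) lemma: for every condition, every $k$ and every $N$ there is an extension agreeing with it below $N$, with controlled norm loss above $N$, below which $\dot f(k)$ is decided by each of the finitely many possible restrictions of the generic to $\bigcup_{m<N'}I_m$ for some $N'$; this is proved by iterating over those finitely many ``trunks'' and using subadditivity of $\phi_m$ to absorb the halving losses. A symptom that something is wrong: your independent-reals fusion, if it ran as written, would terminate in a single condition deciding $\dot x(k)$ outright for every $k$, i.e.\ forcing $\dot x$ to equal a ground-model real, which is false for (say) the name for the generic point read modulo $2$. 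After continuous reading, Hales--Jewett only homogenizes $\dot x(k)$ on a combinatorial line relative to each of the finitely many possible pasts, and producing the infinite ground-model set on which $\dot x$ is constant requires a further pigeonhole over those pasts.

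By contrast, your argument for the last clause is correct and clean: replacing $b_{m_0}$ by a combinatorial line $L$, observing that the cyclic shifts of $L$ induce order-automorphisms $\gs_i$ of $R_n$ below the strengthened condition, and pushing the generic filter forward does produce $n$ mutually generic-looking points forming a $\Delta_n$-hyperedge through $r$.
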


\noindent The argument of \cite[Section 11.8]{z:geometric} can then be repeated to yield

\begin{proposition}
\label{deltanproposition}
In every compactly balanced extension of the choiceless Solovay model, the chromatic number of $\Delta_n$ is uncountable.
\end{proposition}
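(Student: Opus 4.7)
The plan is to follow the architecture of \cite[Section 11.8]{z:geometric}, adapted to the non-actionable hypergraph $\Delta_n$ using the auxiliary forcing $R_n$. Argue by contradiction: suppose $Q$ is a compactly balanced poset, $H\subset Q$ is generic over the choiceless Solovay model $W$, and $c\colon Y_n\to\gw$ is a $\Delta_n$-coloring in $W[H]$. By the usual Solovay reflection, the name $\dot c$ is definable from a real parameter $z$ and ordinal parameters. Compact balance then produces, below $1_Q$, a compact balanced virtual condition $\bar q$ together with a ground-model function $f\colon Y_n\to\gw$ such that whenever $y\in Y_n$ is sufficiently generic over the data encoding $\bar q$ and $z$, the condition $\bar q$ forces $c(y)=f(y)$.

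Next, work inside a countable transitive intermediate model $V$ containing $z$ and enough information about $\bar q$, and apply the forcing $R_n$. By the preceding proposition, $R_n$ is proper, bounding, adds no independent reals, and below every condition $r\in R_n$ produces a $\Delta_n$-hyperedge of $R_n$-generic reals which meet $r$. Using a finite support product (or iteration) of $R_n$ of length $n$, one arranges in an extension $V[K]$ a $\Delta_n$-hyperedge $\{y_0,\ldots,y_{n-1}\}$ whose elements are pairwise mutually $R_n$-generic over $V$. The ``no independent reals'' property of $R_n$ is the crucial ingredient: it allows this mutual genericity to be arranged while preserving the combinatorial-line structure produced by the single $R_n$-generic filter.

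Step three transfers to $W[H]$: since the Solovay model absorbs small generic extensions, the tuple $\{y_i\colon i<n\}$ exists in $W$ and each $y_i$ is generic over the relevant parameters, so $c(y_i)=f(y_i)$ for each $i<n$. A pairwise symmetry argument, using that for any $i\ne j$ the point $y_i$ is generic over the model containing $y_j$ together with $\bar q$, forces $f(y_i)=f(y_j)$, so the hyperedge $\{y_0,\ldots,y_{n-1}\}$ is $c$-monochromatic. This contradicts that $c$ is a $\Delta_n$-coloring and completes the proof.

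The main obstacle is the transition from the actionable setting of \cite[Section 11.8]{z:geometric} to the non-actionable $\Delta_n$: in the actionable case, a group action permutes the hyperedge coordinates and transports decided color values between them cleanly, whereas here the argument must be extracted directly from the Hales--Jewett combinatorics of the submeasures $\phi_m$ underlying $R_n$, together with the fact that a single $R_n$-condition already sees an entire $\Delta_n$-hyperedge below it. Verifying that the finite support product of $R_n$ still delivers the required simultaneous genericity for all $n$ coordinates of the hyperedge, and that compact balance then yields the shared color value, is the delicate part; everything else is a restatement of the argument of \cite[Claim 11.8.5]{z:geometric}.
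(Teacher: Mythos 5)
There is a genuine gap, and it is fatal to the route you chose. Your second step asks for a $\Delta_n$-hyperedge $\{y_0,\dots,y_{n-1}\}$ whose elements are \emph{pairwise mutually $R_n$-generic} over $V$, obtained from a finite support product of $n$ copies of $R_n$. No such hyperedge can exist for any forcing that adds reals: by definition a $\Delta_n$-hyperedge is determined by a finite set $a\subset\gw$ and a tail $z\in n^{\gw\setminus a}$, so any element $y_i$ of the hyperedge computes every other element $y_j$ from the finite parameter $a$. Hence $y_j\in V[y_i]$ for all $i,j$, and $y_j$ cannot be generic over $V[y_i]$. This also breaks your final ``pairwise symmetry argument,'' which is exactly the mutual-genericity argument used for plainly balanced (or placid) posets, e.g.\ in the Vitali-set exclusion; it is not available here and is not the mechanism behind compact balance. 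The point of the preceding proposition in the paper is precisely the opposite of what you use it for: a \emph{single} $R_n$-generic filter already produces, inside one extension $V[K]$, an entire $\Delta_n$-hyperedge each of whose elements is separately $R_n$-generic over $V$. The properness, bounding, and no-independent-reals properties are not there to ``preserve the combinatorial-line structure under a product''; they are there so that the map sending a generic point $y$ to the balanced virtual condition and color deciding $\dot c(y)$ in $V[K][y]$ can be stabilized on a condition of $R_n$ and a convergent net extracted in the \emph{compact} Hausdorff space of balanced classes, which is what forces all $n$ points of the single-extension hyperedge to receive the same color.

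Your first step is also misstated: compact balance does not hand you a ground-model function $f\colon Y_n\to\gw$ with a single virtual condition $\bar q$ forcing $c(y)=f(y)$ for all sufficiently generic $y$. The color of a point $y$ generic over the intermediate model is decided only by a balanced condition living in the model \emph{containing} $y$, and different points may a priori select different balanced conditions and different colors; resolving that tension is the entire content of the compactness argument of \cite[Section 11.8]{z:geometric}. So the proposal as written substitutes the (inapplicable) placid/mutual-genericity scheme for the compactly balanced one. To repair it, drop the product of copies of $R_n$, take the hyperedge delivered by one $R_n$-generic filter below a suitably chosen condition $r$, and run the stabilization-plus-compactness argument of \cite[Claim 11.8.5]{z:geometric}, replacing the appeal to actionability by the fact, established in the preceding proposition, that the whole hyperedge lies below $r$ in a single extension.
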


Finally, I return to abelian groups and linear equations.
Let $\langle X, +\rangle$ be a Polish abelian group, let $n\geq 3$ be a number, and let $g_i\colon X\to X$ for $i\in n$ be continuous homomorphisms such that $\gS_{i\in n}g_i=0$. Write $\Gamma$ for the hypergraph of arity $n$ containing a tuple $\langle x_i\colon n\rangle$ of pairwise distinct points if $\gS_{i\in n}g_i(x_i)=0$. Suppose that open neighborhood of $0$ in $X$ contains a $\Gamma$-hyperedge consisting of nonzero points.

Let $d$ be a left-invariant compatible metric on $X$. Since $X$ is abelian, $d$ is both-sided invariant, so complete. By recursion on $m\in\gw$, select nonzero pairwise distinct points $x_i(m)\in X$ for $i\in n$ so that

\begin{itemize}
\item $\gS_ig_i(x_i(m))=0$;
\item $d(0, x_i(m))<2^{-m}$ times the minimum $d$-distance between distinct points of the form $x_j(k)$ for $k\in m$ and $j\in n$.
\end{itemize}

\noindent In addition, define the map $\pi_n\colon Y_n\to X$ by setting $\pi_n(y)=\gS_m x_{y_m}(m)$. The completeness an invariance of the metric $d$ implies that the map $\pi_n$ is well-defined and continuous. In addition, the continuity of the homomorphisms $g_i$ for $i\in n$ show that if $\langle y_i\colon i\in n\rangle$ is a $\Delta_n$-hyperedge, then $\langle \pi_n(y_i)\colon i\in n\rangle$ is a $\Gamma$-hyperedge; the map $\pi_n$ is a homomorphism of $\Delta_n$ to $\Gamma$. In conjunction with Proposition~\ref{deltanproposition} this concludes the proof of Theorem~\ref{maintheorem2}.

\section{Problematic hypergraphs}
\label{problematicsection}

There are algebraic hypergraphs for which the existence of colorings yields remarkable consequences in ZF+DC. 

\begin{definition}
$\Gamma_R$ is the hypergraph of arity four on $\mathbb{R}^2$ consisting of all sets of the form $a\times b$ where both $a, b\subset\mathbb{R}^2$ are sets of cardinality two.
\end{definition}

\noindent To tie this hypergraph in with the previous sections, note that it is a subgraph of the linear hypergraph on $\mathbb{R}^2$ given by the equation $x_0-x_1+x_2-x_3=0$.
In ZFC, it is a well-known result (a conjunction of \cite{erdos:old} and \cite[Theorem 2]{komjath:three}) that existence of countable $\Gamma_R$-coloring is equivalent to the Continuum Hypothesis. In ZF, the situation is more nuanced.

\begin{theorem}
\label{rectangletheorem}
(ZF) Suppose that $\Gamma_R$ has countable chromatic number. Then

\begin{enumerate}
\item if there is an $\gw_1$-sequence of pairwise distinct reals, then there is such a sequence enumerating all reals;
\item there is a complete countable section for the $\mathbb{E}_1$ equivalence relation.
\end{enumerate}
\end{theorem}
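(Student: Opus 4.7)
Both parts exploit the following immediate consequence of rectangle-freeness: for any $\Gamma_R$-coloring $c:\mathbb{R}^2\to\omega$, if $y, y'\in\mathbb{R}$ are distinct then for each color $n$ the set $\{x\in\mathbb{R}:c(y,x)=c(y',x)=n\}$ has at most one element, for otherwise two such $x$'s paired with $y, y'$ would form a monochromatic hyperedge of $\Gamma_R$. Summing over $n$, the set $\{x:c(y,x)=c(y',x)\}$ is countable, and symmetrically with the coordinates swapped.

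For part (1), fix the $\omega_1$-sequence $\langle x_\alpha:\alpha<\omega_1\rangle$ and attach to each $y\in\mathbb{R}$ the function $g_y:\omega_1\to\omega$, $g_y(\alpha)=c(y,x_\alpha)$. Then $y\mapsto g_y$ is an injection $\mathbb{R}\hookrightarrow\omega^{\omega_1}$ whose image has the property that distinct members agree on at most countably many coordinates. I would attempt to construct a canonical well-ordering of $\mathbb{R}$ of order-type $\omega_1$ from this data. Fodor's lemma in ZF, applied to the regressive $g_y$, assigns to each $y$ a canonical pair $(n_y,A_y)$ with $n_y$ the least color for which $g_y^{-1}(n_y)$ is stationary and $A_y=g_y^{-1}(n_y)$; rectangle-freeness then forces $|A_y\cap A_{y'}|\leq 1$ whenever $n_y=n_{y'}$ and $y\neq y'$. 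Iterating this analysis on successive remainders $\omega_1\setminus A_y$ encodes each $y$ by a countable sequence of such data, which combined with the $\omega_1$-sequence should give an injection $\mathbb{R}\hookrightarrow\omega_1$.

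For part (2), for $y\in\mathbb{R}^\omega$ consider the color matrix $M_y:\omega^2\to\omega$ defined by $M_y(n,m)=c(y(n),y(m))$; if $y\mathrel{\mathbb{E}_1}y'$ then $M_y$ and $M_{y'}$ differ on only finitely many index pairs, so the tail behavior of $M_y$ is an $\mathbb{E}_1$-invariant. The complete countable section $S$ would consist of those $y$ that are \emph{$c$-normalized} in the sense that no finite-support modification of $y$ within $[y]_{\mathbb{E}_1}$ yields a lexicographically smaller color matrix. Completeness follows from a finite-support amalgamation reminiscent of Proposition~\ref{amalgamationproposition}: starting from any representative, one successively adjusts finitely many coordinates to decrease the relevant $c$-values until the normalized form is reached. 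Rectangle-freeness should then force each $\mathbb{E}_1$-class to contain only countably many normalized sequences, since the pairwise comparison of two normalized representatives must collapse on all but finitely many coordinates.

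The central obstacle for (1) is that the naive bound on almost-disjoint stationary subfamilies of $\omega_1$ does not restrict the family to cardinality $\aleph_1$ even in ZFC (it can be as large as $2^{\aleph_1}$), so the coherence coming from the single coloring $c$ must be crucially exploited to upgrade the countable-agreement property into a genuine injection into $\omega_1$; this is where the interaction between Fodor's lemma and the coloring has to be delicate. For (2), the hard part is simultaneously ensuring that $c$-normalized representatives exist in every $\mathbb{E}_1$-class (completeness) and restrict each class to a countable subset, which will require applying rectangle-freeness specifically to the finite-support perturbations that distinguish representatives within a class.
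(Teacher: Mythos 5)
Your opening observation is correct and is indeed the combinatorial heart of the matter: for distinct $y,y'$ the set $\{x\colon c(y,x)=c(y',x)\}$ is countable. But neither part of your argument closes, and you essentially concede this yourself. For (1), the property that the functions $g_y\colon\gw_1\to\gw$ pairwise agree on only countably many coordinates cannot by itself yield an injection of $\mathbb{R}$ into $\gw_1$ -- as you note, such families can have size $2^{\aleph_1}$ -- and the Fodor-type refinement does not repair this: Fodor's lemma (and even the $\gs$-completeness of the club filter on $\gw_1$) is not a theorem of ZF, and $g_y$ is not regressive in any case. The missing idea is a \emph{definability} argument. The paper fixes the sequence $s$ and the coloring $c$ and shows that $HOD_{s,c}$ contains every real: if $r\notin HOD_{s,c}$, a counting argument gives $\ga\neq\gb$ with $c(r,r_\ga)=c(r,r_\gb)=n$, and $r$ cannot be the unique real with this property (else it would be ordinal-definable from $s,c$), so a second such $r'$ produces the monochromatic rectangle $\{r,r'\}\times\{r_\ga,r_\gb\}$. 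One then works inside the ZFC model $HOD_{s,c}$, where the classical Erd\H{o}s--Komj\'ath theorem (countable chromatic number of $\Gamma_R$ implies CH) supplies the enumeration of $\mathbb{R}$ in type $\gw_1$.

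For (2), your normalization scheme has a fatal flaw: an $\mathbb{E}_1$-class of $x\in\mathbb{R}^\gw$ consists of all finite modifications of $x$ with coordinates ranging over $\mathbb{R}$, so it has size continuum; a lexicographically minimal color matrix among continuum many finite-support perturbations need not be attained, and even if normalized representatives exist you give no argument (beyond "rectangle-freeness should force") that each class contains only countably many of them. The paper again proceeds via inner models: for each $n$ let $M_{nx}$ be the class of sets hereditarily ordinal-definable from $c$ and the tail of $x$ past $n$; these models decrease, their $\gw_1$'s and (by the argument of part (1)) their sets of reals stabilize; if the stable real set is uncountable one invokes (1) to well-order $\mathbb{R}$, and otherwise $a(x)=[x]_{\mathbb{E}_1}\cap M_{nx}$ at the stable stage is a nonempty countable set depending only on $[x]_{\mathbb{E}_1}$, and $\bigcup_x a(x)$ is the desired complete countable section. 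Both parts thus hinge on passing to HOD-style models where ZFC arguments become available, which is the ingredient absent from your proposal.
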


\noindent The second item is the opposite of Theorem~\ref{maintheorem}(3). While I do not know if the existence of a Vitali set follows from the countable chromatic number of $\Gamma_R$, the first item shows that any negative consistency result in this direction must use an inaccessible cardinal. It also shows that the results of \cite{z:triangles, z:krull} require the inaccessible cardinal assumption as well. 

\begin{proof}
Let $c\colon \mathbb{R}^2\to\gw$ be a total $\Gamma_R$-coloring.

For (1), suppose that $s=\langle r_\ga\colon\ga\in\gw_1\rangle$ be a sequence of pairwise distinct reals. 

\begin{claim}
The model $HOD_{s,c}$ contains all reals.
\end{claim}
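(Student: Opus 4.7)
My plan is to show that every real $y$ is ordinal-definable from $s$ and $c$, by combining the pigeonhole principle with the unique-recovery property granted by $c$ having no monochromatic $2\times 2$ rectangle.

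First I would fix an arbitrary $y\in\mathbb{R}$ and consider the function $g_y\colon\omega_1\to\omega$ defined by $g_y(\alpha)=c(r_\alpha,y)$. Since $\omega_1$ is uncountable in ZF, pigeonhole produces distinct ordinals $\alpha_0<\alpha_1<\omega_1$ and a number $n\in\omega$ with $c(r_{\alpha_0},y)=c(r_{\alpha_1},y)=n$. The key step is to invoke the coloring property: if some $y'\neq y$ also satisfied $c(r_{\alpha_0},y')=c(r_{\alpha_1},y')=n$, then $\{r_{\alpha_0},r_{\alpha_1}\}\times\{y,y'\}$ would be a $\Gamma_R$-hyperedge (using that the entries of $s$ are pairwise distinct, so $r_{\alpha_0}\neq r_{\alpha_1}$) on which $c$ is constantly $n$, contradicting that $c$ is a $\Gamma_R$-coloring. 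Thus $y$ is the \emph{unique} real $z$ with $c(r_{\alpha_0},z)=c(r_{\alpha_1},z)=n$.

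This uniqueness lets me finish: the defining formula has only $s$, $c$, and the ordinals $\alpha_0,\alpha_1,n$ as parameters, so $y$ is ordinal-definable from $s,c$ and hence lies in $HOD_{s,c}$. Since $y$ was arbitrary, $\mathbb{R}\subseteq HOD_{s,c}$. I do not expect any serious obstacle: the argument stays strictly within ZF (pigeonhole on a map $\omega_1\to\omega$ and the uniqueness step both avoid any appeal to choice), and the only thing to be careful about is that the witnesses $\alpha_0,\alpha_1,n$ are chosen depending on $y$; this is harmless because they are ordinals and we only need $y$ to be definable from \emph{some} ordinals together with $s$ and $c$.
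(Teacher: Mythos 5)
Your argument is correct and is essentially the paper's own proof, just run directly rather than by contradiction: the paper fixes $r\notin HOD_{s,c}$, pigeonholes to get two indices with the same color, and derives a monochromatic $2\times 2$ rectangle from non-uniqueness, which is exactly your pigeonhole-plus-uniqueness step read contrapositively. Your closing remark that the witnesses $\alpha_0,\alpha_1,n$ may depend on $y$ because ordinal parameters are permitted in $HOD_{s,c}$ is the right justification and matches the paper's implicit use of the same fact.
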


\begin{proof}
Suppose towards a contradiction that it does not. Let $r\in\mathbb{R}\setminus HOD_{s,c}$ be any real. Use a counting argument to find distinct countable ordinals $\ga, \gb$ such that the points $\langle r, r_\ga\rangle, \langle r, r_\gb\rangle\in\mathbb{R}^2$ receive the same color in $c$, say $n\in\gw$. The real $r$ cannot be unique with this property, since in such a case it would be definable and therefore an element of $HOD_{s, c}$. If $r'$ is a different real with this property, then te set $\{r, r'\}\times \{r_\ga, r_\gb\}$ is a monochromatic $\Gamma_R$-hyperedge with color $n$, a contradiction. 
\end{proof}

\noindent Now, work in the model $HOD_{s,c}$ and argue that the Continuum Hypothesis holds there as ZFC+countable chromatic number of $\Gamma_R$ implies CH; this will conclude the proof. This is in fact a known result \cite[Theorem 2]{komjath:three}; for completeness, let me include it. Work in ZFC, suppose that $d\colon\mathbb{R}^2\to\gw$ is a $\Gamma_R$-coloring, and assume towards a contradiction that the Continuum Hypothesis fails. Let $M$ be an elementary submodel of cardinality $\aleph_1$ of a large structure containing the coloring $d$. Let $r_0\in\mathbb{R}\setminus M$ be any point. By a counting argument, there must be a number $n\in\gw$ and distinct reals $s_0, s_1\in\mathbb{R}\cap M$ such that $d(s_0, r_0)=d(s_1, r_0)=n$. By elementarity of the model $M$, there has to be a point $r_1\in\mathbb{R}\cap M$ satisfying the latter equation with $r_0$ replaced with $r_1$. Then the set $\{s_0, s_1\}\times \{r_0, r_1\}$ is a $\Gamma_R$-hyperedge monochromatic in color $n$, contradicting the choice of the coloring $d$.

For (2), suppose that $x\in\mathbb{R}^\gw$ be a point, and for each $n\in\gw$ let $M_{nx}$ be the class of all sets hereditarily ordinally definable from $c$ and $x\restriction M_{nx}$. These models clearly form a sequence decreasing with respect to inclusion. Thus, the ordinals $(\gw_1)^{M_{nx}}$ form a non-increasing sequence, which has to stabilize at some number $m\in\gw$. The same proof in (1) shows that the sets $\mathbb{R}\cap M_{nx}$ stabilize at that same number $m\in\gw$. If this stabilized set is uncountable, then there is a bijection bewteen $\gw_1$ and $\mathbb{R}$ by (1), and the proof is complete. 

Suppose then that for every sequence $x\in\mathbb{R}^\gw$, the sets $\mathbb{R}\cap M_{nx}$ for $n\in\gw$ stabilize to some countable set. Then, even the sets $[x]_{\mathbb{E}_1}\cap M_{nx}$ stabilize in some countable set, call it $a(x)$. Note that the set $a(x)$ depends only on the $\mathbb{E}_1$-class of $x$. Thus, the set $\bigcup\{a(x)\colon x\in\mathbb{R}^\gw\}$ is a complete countable section for $\mathbb{E}_1$.
\end{proof}

\begin{definition}
$\Gamma_T$ is the hypergraph of arity three on $\mathbb{R}^2$ consisting of those sets for which adding a single point results in a $\Gamma_R$-hyperedge.
\end{definition}

\noindent Clearly, any $\Gamma_T$-coloring is a $\Gamma_R$-coloring; so, it is more difficult to find a $\Gamma_T$-coloring. Again, in ZFC, it is a well-known result ??? that existence of countable $\Gamma_T$-coloring is equivalent to the Continuum Hypothesis. Thus, in ZFC, hypergraphs $\Gamma_R$ and $\Gamma_T$ are conflated in this way, even though the construction of $\Gamma_T$-coloring is much more difficult.  In ZF, the distinction between the two hypergraphs becomes immediately apparent:

\begin{theorem}
(ZF) If $\Gamma_T$ has countable chromatic number, then there is a total countable-to-one map from $\mathbb{R}$ to $\gw_1$.
\end{theorem}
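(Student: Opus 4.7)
Let $c\colon\R^2\to\gw$ be a $\Gamma_T$-coloring. The crucial structural consequence is uniqueness: for any two distinct $s_1,s_2\in\R$ and any color $n\in\gw$, there is at most one $r\in\R$ with $c(r,s_1)=c(r,s_2)=n$, since two such witnesses $r\neq r'$ would make $\{r,r'\}\times\{s_1,s_2\}$ a four-corner monochromatic rectangle, any three corners of which form a monochromatic $\Gamma_T$-edge. Denote the unique witness when it exists by $w(\{s_1,s_2\},n)$; this defines a partial function $w\colon[\R]^2\times\gw\rightharpoonup\R$ definable from $c$.

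The plan is to construct a $\gw_1$-sequence $\langle R_\ga:\ga<\gw_1\rangle$ of countable subsets of $\R$ with $\bigcup_\ga R_\ga=\R$; then $r\mapsto\min\{\ga:r\in R_\ga\}$ is total and countable-to-one into $\gw_1$. I would use iterated $w$-closure starting from $R_0=\Q$: at successors set $R_{\ga+1}=R_\ga\cup\{w(\{s_1,s_2\},n):s_1\neq s_2\in R_\ga,\,n\in\gw\}$, and at countable limits $R_\lambda=\bigcup_{\gb<\lambda}R_\gb$. Propagating canonical enumerations of each stage forward through the recursion keeps each $R_\ga$ countable.

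The key lemma is the dichotomy that every $w$-closed subset $X\subseteq\R$ is either countable or all of $\R$. For if $X$ is uncountable and $w$-closed and $r\in\R\setminus X$, then $c(r,\cdot)|_X\colon X\to\gw$ sends an uncountable set into a countable one, so by pigeonhole some distinct $s_1,s_2\in X$ satisfy $c(r,s_1)=c(r,s_2)=n$; but then $r=w(\{s_1,s_2\},n)\in X$ by $w$-closure, contradicting $r\notin X$. Since $R_*:=\bigcup_{\ga<\gw_1}R_\ga$ is $w$-closed by construction, the dichotomy forces either $R_*=\R$ (and we are done), or $R_*$ is countable.

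The hard case is $R_*$ countable. Here I would first dispose of an easy subcase: if some row $r\in\R$ has every color class $A_{r,n}=\{s:c(r,s)=n\}$ countable, then $c(r,\cdot)\colon\R\to\gw\hookrightarrow\gw_1$ is itself the desired countable-to-one map. Otherwise every row has an uncountable color class, and combining the row- and column-wise uniqueness of $w$ with a transfinite induction I would extract a strictly ascending chain of countable $w$-closed subsets of length $\gw_1$; its union is $w$-closed of size $\aone$, hence equals $\R$ by the dichotomy, yielding the required sequence. The main obstacle I anticipate is precisely this chain construction: making the successor step canonical enough to proceed in ZF without DC, and ensuring the chain genuinely has length $\gw_1$ and exhausts $\R$.
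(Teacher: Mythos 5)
There is a genuine gap, and it sits exactly where you flagged it. First, note that everything you actually establish --- the uniqueness of the witness $w(\{s_1,s_2\},n)$ and the dichotomy for $w$-closed sets --- uses only the \emph{four}-corner monochromatic rectangle, i.e.\ only the fact that $c$ is a $\Gamma_R$-coloring. The paper is at pains to separate $\Gamma_R$ from $\Gamma_T$ in ZF and derives strictly weaker conclusions from a $\Gamma_R$-coloring; your argument never invokes the defining feature of $\Gamma_T$, namely that already an L-shaped triple (three corners of a rectangle) must be non-monochromatic. Second, the first branch of your case split is vacuous: since $w$ is a partial function of finite arity, the $w$-closure of $\Q$ is reached after $\gw$ successor steps with canonically propagated enumerations, so $R_*$ is \emph{always} countable and the dichotomy never fires. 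The whole theorem therefore rests on your ``hard case,'' and the method you propose there --- a transfinite recursion of length $\gw_1$ that at each stage picks a real outside a given countable set and an enumeration of the new stage --- is precisely what ZF (indeed ZF+DC) does not license: there is no definable choice function on nonempty sets of reals, and a countable union of countable sets need not be countable without a uniform choice of enumerations. So the proposal does not close.

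For contrast, the paper's proof replaces all free choices by definability. It considers the models $M_x=\mathrm{HOD}_{x,c}$ and, after disposing of the case where some $\R\cap M_x$ is uncountable via Theorem~\ref{rectangletheorem}(1), defines $\pi(x)=\gw_1^{M_x}$ and shows $\pi$ is countable-to-one. The countability of the fibers is exactly where the three-corner property of $\Gamma_T$ enters: from two reals $x_0,x_1$ in the same fiber with $x_1\notin M_{x_0}$, one manufactures a monochromatic L-shaped $\Gamma_T$-hyperedge (Cases 1a and 1b), using that a color repeated along a row and also along a column through the same point already violates the coloring. If you want to rescue your approach, you would need a canonical, choice-free substitute for your chain construction, and you will at some point have to use the L-shaped triples; the $\mathrm{HOD}$ device is the paper's way of doing both at once.
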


\noindent In particular, in ZF,  if $\Gamma_T$ has countable chromatic number, then every equivalence relation on a Polish space has countable complete section. I do not know if a Vitali set must exist. However, balanced forcing cannot be applied to obtain any consistency result in this direction, since countable-to-one maps from $\mathbb{R}$ to $\gw_1$ do not exist in balanced extensions of the choiceless Solovay model.

\begin{proof}
Let $c\colon \mathbb{R}^2\to\gw$ be a $\Gamma_T$ coloring. For $x\in\mathbb{R}$ write $M_x$ for the model of all sets hereditarily ordinally definable from $x$ and $c$. Note that $c\restriction M_x\in M_x$ holds. The argument divides into two cases.

\noindent\textbf{Case 1.} There is a real $x$ such that $\mathbb{R}\cap M_x$ is uncountable. In this case, apply Theorem~\ref{rectangletheorem}(1) to show that there is even a total injection from $\mathbb{R}$ to $\gw_1$.

\noindent\textbf{Case 2.} Case 1 fails. Let $\pi\colon\mathbb{R}\to\gw_1$ be the map defined by $\pi(x)=\gw_1^{M_x}$. The case assumption shows that the range of this map is indeed a subset of $\gw_1$. We will show that $\pi$ is in fact countable-to-one. Suppose towards contradiction that it is not, and let $\ga\in\gw_1$ be an ordinal such that the set $\{x\in\mathbb{R}\colon \pi(x)=\ga\}$ is uncountable. By the case assumption, there have to be points $x_0, x_1$ in this set such that $x_1\notin M_{x_0}$. We will reach a contradiction by a split into subcases.

\noindent\textbf{Case 1a.} Suppose first that $x_0\notin M_{x_1}$. Let $L_0$ be the line in $\mathbb{R}^2$ consisting of points whose $0$-th coordinate is equal to $x_0$ and let $L_1$ be the line in $\mathbb{R}^2$ consisting of points whose $1$-st coordinate is equal to $x_1$. Let $n=c(x_0, x_1)$. Then $\langle x_0, x_1\rangle$ is not the only point on $L_0$ which gets color $n$--otherwise $x_1$ would be definable from $x_0$. Let $\langle x_0, x_2\rangle\in L_0$ be a different point which gets color $n$. By the same argument, $\langle x_0, x_1\rangle$ is not the only point on $L_1$ which gets color $n$--otherwise $x_0$ would be definable from $x_1$. Let $\langle x_3, x_1\rangle\in L_1$ be a different point which gets color $n$. Then $\{\langle x_0, x_1\rangle, \langle x_0, x_2\rangle, \langle x_3, x_1\rangle\}$ is a monochromatic $\Gamma_T$-hyperedge of color $n$. A contradiction.

\noindent\textbf{Case 1b.} Assume now that $x_0\in M_{x_1}$. The set $\mathbb{R}\cap M_{x_0}$ then belongs to $M_{x_1}$ and must be uncountable there because the two models have the same $\gw_1$. By a counting argument in $M_{x_1}$, there must be distinct points $y_0, y_1\in\mathbb{R}\cap M_{x_0}$ such that $\langle y_0, x_1\rangle$ and $\langle y_1, x_1\rangle$ get the same $c$-color, say $n$. Now, $x_1$ cannot be the only point such that $\langle y_1, x_1\rangle$ gets the color $n$--otherwise $x_1$ would be definable from $y_1$ and then also from $x_0$. So, pick a point $z\in\mathbb{R}$ such that $c(y_1, z)=n$ and note that the set $\{\langle y_0, x_1\rangle, \langle y_1, x_1\rangle, \langle y_1, z\rangle\}$ is a $c$-monochromatic $\Gamma_T$-hyperedge of color $n$. This is a final contradiction.
\end{proof}

\bibliographystyle{plain} 
\bibliography{odkazy,zapletal,shelah}

\end{document}